\documentclass[11pt,twoside]{article}
\usepackage{etex}
\usepackage[dvipsnames]{xcolor} 
\usepackage{color}

\usepackage{booktabs}
\usepackage{amsthm}

\usepackage{amsfonts,amssymb,amsmath,amsxtra,url,float} 
\allowdisplaybreaks[4] 
\usepackage[colorlinks,
  linkcolor=magenta, %
  anchorcolor=Periwinkle,
  citecolor=violet,
  urlcolor=blue
  ]{hyperref} 
\usepackage{enumitem}
\usepackage{geometry} 
\usepackage{rotating} 
\usepackage{lscape} 
\usepackage{yhmath} 
\usepackage{multirow}
\usepackage{graphicx} 
\usepackage{subfigure} 
\usepackage{tikz}
\usepackage{pgfplots}
\usepackage{tikz-3dplot}
\usetikzlibrary{patterns}
\usetikzlibrary{3d,calc}
\usetikzlibrary{decorations.pathreplacing,decorations.markings}
 \tikzset{
  on each segment/.style={
    decorate,
    decoration={
      show path construction,
      moveto code={},
      lineto code={
        \path [#1]
        (\tikzinputsegmentfirst) -- (\tikzinputsegmentlast);
      },
      curveto code={
        \path [#1] (\tikzinputsegmentfirst)
        .. controls
        (\tikzinputsegmentsupporta) and (\tikzinputsegmentsupportb)
        ..
        (\tikzinputsegmentlast);
      },
      closepath code={
        \path [#1]
        (\tikzinputsegmentfirst) -- (\tikzinputsegmentlast);
      },
    },
  },
  mid arrow/.style={postaction={decorate,decoration={
        markings,
        mark=at position 0.6 with {\arrow[#1]{stealth}} 
      }}},
}
\usetikzlibrary{arrows}
\usetikzlibrary{trees}

\usetikzlibrary{matrix}
\usetikzlibrary{patterns}
\usetikzlibrary{shadings} 
\usepackage{fancyhdr} 
\def\headertitle{Geometric Realizations of Capability Spaces of Large Models and Zigzag Persistent Homology}
\def\fstpage{1} 
\def\page{$\begin{matrix} {\color{white}0} \\ \thepage \end{matrix}$} 

\usepackage[all]{xy} 
\usepackage{dsfont} 
\usepackage{cite}
\usepackage{mathrsfs} 
\numberwithin{figure}{section}
\usepackage{marginnote} 
\usepackage{graphicx} 
\usepackage{multicol} 

\usepackage{enumitem}
\setenumerate[1]{itemsep=0pt,partopsep=0pt,parsep=\parskip,topsep=3pt}
\setitemize[1]{itemsep=0pt,partopsep=0pt,parsep=\parskip,topsep=3pt}
\setdescription{itemsep=0pt,partopsep=0pt,parsep=\parskip,topsep=3pt}
\setlist[itemize]{leftmargin=35pt}
\setlist[enumerate]{leftmargin=35pt}
\usepackage{changepage} 
\newcommand{\checks}[1]{{\color{red}{#1}}} 

\usepackage{contour} 
\usepackage{xcolor}
\contourlength{0.03em}
\contournumber{auto}

\newtheorem{theorem}{Theorem}[section]
\newtheorem{lemma}[theorem]{Lemma}
\newtheorem{corollary}[theorem]{Corollary}
\newtheorem{main theorem}[theorem]{Main Theorem}
\newtheorem{proposition}[theorem]{Proposition}
\newtheorem{definition}[theorem]{Definition}

\newtheorem{remark}[theorem]{Remark}
\newtheorem{example}[theorem]{Example}

\numberwithin{equation}{section}

\def\orcid{
\begin{tikzpicture}[baseline=-1mm]
\filldraw[Green!35] (0,0) circle (5pt);
\filldraw[white] (0,0) node{\tiny\textbf{iD}};
\end{tikzpicture}
}

\usepackage{extarrows}

\def\heart{{\color{red}\pmb{\heartsuit}}}
\def\diamo{{\color{red}\pmb{\diamondsuit}}}

\def\orcid{
\begin{tikzpicture}[baseline=-1mm]
\filldraw[Green!35] (0,0) circle (5pt);
\filldraw[white] (0,0) node{\tiny\textbf{iD}};
\end{tikzpicture}
}

\title{\bf
Geometric Realizations of Capability Spaces of Large Models and Zigzag Persistent Homology
}

\vspace{5mm}

\author{
Yu-Zhe Liu $^{\ref{Author1}, \ref{Author4}, \href{https://orcid.org/0009-0005-1110-386X}{\orcid}\ref{orcid1}}$,~
Shengda Liu $^{\ref{Author2},{\color{red}\ddag}, \href{https://orcid.org/0000-0003-1382-4212}{\orcid}\ref{orcid2}}$,~
Jianyong Pi $^{\ref{Author3}}$,~
Chao Zhang $^{\ref{Author1}, \ref{Author4}}$
}
\date{ }
\begin{document}






\maketitle

\begin{enumerate}[label=\textbf{\color{red}\arabic*}] \footnotesize
  \item
    \begin{center}
      School of Mathematics and Statistics,
      Guizhou University, Guiyang 550025, Guizhou, China;
    \end{center}
    \label{Author1}

  \item
    \begin{center}
      The Key Laboratory of Cognition and Decision Intelligence for Complex Systems,
      Institute of Automation,
      Chinese Academy of Sciences, Beijing 100190, China;
    \end{center}
    \label{Author2}

  \item
    \begin{center}
      School of Computer Science and Technology, Guizhou University, Guiyang 550025, Guizhou, China;
    \end{center}
    \label{Author3}

  \item
    \begin{center}
      Guizhou Provincial Key Laboratory of Applied Mathematics and Computing Power \& Algorithms,
      Guiyang, 550025, Guizhou, China;
    \end{center}
    \label{Author4}

  \item[]
    \begin{center}
      E-mail:
      \url{liuyz@gzu.edu.cn} / \url{yzliu3@163.com} (Y.-Z. Liu) \par
      \url{thinksheng@foxmail.com} (S. Liu)
      \url{jypi@gzu.edu.cn} (J. Pi)
      \url{zhangc@amss.ac.cn} (C. Zhang)
    \end{center}
\end{enumerate}
\vspace{1mm}
\begin{enumerate}[label=\textbf{\color{red}$\ddag$}]
  \item \footnotesize
    \begin{center}
      Corresponding author
    \end{center} \label{CorrespondingAuthor}
\end{enumerate}
\vspace{1mm}
\begin{enumerate}[leftmargin=6.5cm] \footnotesize
  \item[\orcid]
      ORCID: \href{https://orcid.org/0009-0005-1110-386X}{0009-0005-1110-386X}
      \label{orcid1} 
  \item[\orcid]
      ORCID: \href{https://orcid.org/0009-0005-1110-386X}{0000-0003-1382-4212}
      \label{orcid2} 
\end{enumerate}

\hspace{2mm}



\begin{adjustwidth}{1cm}{1cm} 
\noindent \footnotesize 
\textbf{Abstract.}
We propose a topological model for capability changes during large-model training.
The model represents knowledge states and preregistered capability probes as subcomplexes of a finite regular CW complex.
We prove that attaching a single $n$-cell can only create a class in $\mathrm H_n$ or kill a class in $\mathrm H_{n-1}$.
With the reliability threshold and the training checkpoint as two parameter axes, capability gains and losses make the spaces along the training axis nonnested.
Union or intersection bridges produce zigzag persistence modules, and we distinguish checkpoint, transition, and bridge-sensitive classes. A finite four-stage example is computed by boundary matrices.
The framework records algebraic changes under a chosen encoding, and a homological change is called a candidate capability emergence only after robustness tests, null-model comparisons, and independent behavioral validation.

\noindent\textbf{Key words.}
Large model, knowledge space, CW complex, {topological data analysis,} persistent homology, zigzag persistence, catastrophic forgetting.

\noindent\textbf{2010 Mathematics Subject Classification.}
55N31 
55U10 
68T07 
68T30 

\end{adjustwidth}

%
%
%




\setcounter{tocdepth}{2}
\setcounter{secnumdepth}{3}
\tableofcontents


\def\gldim{\mathrm{gl.dim}}
\def\perm{\mathrm{perm}}
\def\forb{\mathrm{forb}}
\def\rmv{\mathsf{v}}
\def\rmp{\mathsf{w}}
\def\Pic{Figure~}


\section{Introduction}

\def\kk{\Bbbk}
\def\CC{\mathbb{C}}
\def\FF{\mathbb{F}}
\def\PP{\mathbb{P}}
\def\ZZ{\mathbb{Z}}
\def\>={\geqslant}
\def\=<{\leqslant}

\def\HHomolo{\mathrm{HH}}
\def\Homolo{\mathrm{H}}

\def\calA{\mathcal{A}}
\def\calB{\mathcal{B}}
\def\calC{\mathcal{C}}
\def\calD{\mathcal{D}}
\def\calE{\mathcal{E}}
\def\calF{\mathcal{F}}
\def\calG{\mathcal{G}}
\def\calH{\mathcal{H}}
\def\calI{\mathcal{I}}
\def\calJ{\mathcal{J}}
\def\calK{\mathcal{K}}
\def\calL{\mathcal{L}}
\def\calM{\mathcal{M}}
\def\calN{\mathcal{N}}
\def\calO{\mathcal{O}}
\def\calP{\mathcal{P}}
\def\calQ{\mathcal{Q}}
\def\calR{\mathcal{R}}
\def\calS{\mathcal{S}}
\def\calT{\mathcal{T}}
\def\calU{\mathcal{U}}
\def\calV{\mathcal{V}}
\def\calW{\mathcal{W}}
\def\calX{\mathcal{X}}
\def\calY{\mathcal{Y}}
\def\calZ{\mathcal{Z}}

\def\bfP{\mathbf{P}}
\newcommand{\bfp}{\boldsymbol p}

\def\Hom{\mathrm{Hom}}
\def\End{\mathrm{End}}
\def\Ext{\mathrm{Ext}}
\def\Ker{\mathrm{Ker}}
\def\Ima{\mathrm{Im}}
\def\Picagroup{\mathscr{P}\!ic}

\def\Tot{\mathrm{Tot}}
\def\divisor{\mathrm{div}}
\def\Div{\mathrm{Div}}
\def\Aff{\mathbf{A}}

\def\Ring{\mathsf{Ring}}
\def\Modcat{\mathsf{Mod}}
\def\Coh{\mathsf{Coh}}
\def\Dcat{\mathsf{D}}

\newcommand{\defines}[1]{{\color{blue!75}\it #1}}
\def\Cell{\operatorname{Cell}}
\def\rank{\operatorname{rank}}

When a model reads a token sequence, each position can attend to information at other positions in the sequence and thereby determine the current output.
This process involves a core mechanism called self-attention, which is also the core mechanism of Transformer language models \cite{vaswani2017}.
These models learn parameterized conditional distributions from prediction objectives on large corpora.
Intuitively, a large model performs prediction on massive amounts of text.
The most common task is to predict the next word given the preceding words.
Thus, a large model learns how to infer the probability distribution of the next token from the available context.
This distribution is determined by a large number of parameters.
Training a large model means continually adjusting these parameters so that its predictions become more accurate.
It has been observed that within the range covered by experiments, the loss usually decreases according to a stable pattern as the model size, data size, and computation increase.
This pattern is not arbitrary but approximately a power law, known as empirical scaling laws.
This suggests that large-model performance is predictable to some extent \cite{kaplan2020}.
Under a fixed compute budget, one cannot simply make the model larger.
The number of training tokens also matters.
Model size and training data size need to be matched.
For example, a very large model trained on too few tokens may perform worse than a smaller model trained more thoroughly (see compute-optimal training, \cite{hoffmann2022}).
Some capabilities appear suddenly under discrete evaluation metrics.
They are called emergent abilities of large language models.
Specifically, on some tasks, small models perform close to random, and after the model size increases beyond a certain point, accuracy suddenly improves markedly.
It appears that the model suddenly ``learns'' a certain capability.
Such phenomena are called emergent abilities in \cite{wei2022}.
However, this interpretation remains controversial.
Nonlinear or discontinuous metrics can create apparent jumps.
Therefore, some emergence curves become smooth under continuous metrics.
Some researchers argue that the so-called ``sudden appearance'' may not reflect a real internal change in the model but may be an artifact of the evaluation metric.
For example, a metric such as exact match is binary.
If the model output improves slightly, the score may jump directly from 0 to 1, which looks like sudden learning.
If one uses a continuous metric, such as token-level accuracy or probability score, the curve may become smooth without an obvious jump.
Schaeffer et al. proposed this view in 2023 \cite{schaeffer2023}.

We replace the question whether one score crosses one threshold with a structural question.
We ask whether the organization among capabilities changes.
We first realize a knowledge or capability system as a geometric object.
The content mastered at one stage becomes a subspace of this object.
Ordinary persistent homology is suitable for a monotonically expanding knowledge state.
Large models may gain and lose capabilities during continued pretraining, instruction tuning, and domain adaptation.
Catastrophic forgetting has been observed for some models and task orders \cite{luo2023}.
When measured capability spaces (see Definition \ref{def:capab-space}) are not nested, they form a zigzag diagram with arrows in both directions \cite{carlsson2010}.
In the language of topological data analysis, the measured scores or hidden representations are the data, the associated complexes are geometric summaries, and persistent homology records how their homology changes across a scale or a threshold \cite{edelsbrunner2010}. The checkpoint (see Definition \ref{def:training-trajectory}) direction requires zigzag persistence when these summaries are not nested.
This distinction between the threshold direction and the checkpoint direction is illustrated in \Pic \ref{fig:cks mod}
(this figure shows two persistence directions of the capability-space construction:
at a fixed checkpoint, decreasing the reliability threshold produces an ordinary filtration;
at a fixed threshold, varying the training checkpoint produces a nonnested sequence,
and union bridges convert this sequence into a zigzag diagram).

\begin{figure}[htbp]
  \centering
\begin{tikzpicture}[
  x=1cm,
  y=1cm,
  >=stealth,
  font=\small,
  data/.style={
    draw,
    rounded corners=2pt,
    fill=gray!10,
    minimum width=2.4cm,
    minimum height=1.05cm,
    align=center
  },
  state/.style={
    draw,
    rounded corners=2pt,
    fill=blue!8,
    minimum width=1.75cm,
    minimum height=1.05cm,
    align=center
  },
  bridge/.style={
    draw,
    rounded corners=2pt,
    fill=orange!12,
    minimum width=2.15cm,
    minimum height=1.05cm,
    align=center
  },
  output/.style={
    draw,
    rounded corners=2pt,
    fill=green!10,
    minimum width=2.25cm,
    minimum height=1.05cm,
    align=center
  },
  ordinary/.style={
    ->,
    line width=0.8pt
  },
  zigzag/.style={
    ->,
    line width=0.8pt
  }
]



\node[data] (data) at (0.25,3.27)
  {Measured data\\
   scores $s_t(\sigma)$\\
   or hidden states};

\node[state] (kt1) at (4.25,3.27)
  {$K_t(\theta_1)$\\
   fewer cells};

\node[state] (kt2) at (6.75,3.27)
  {$K_t(\theta_2)$\\
   more cells};

\node[state] (kt3) at (9.15,3.27)
  {$K_t(\theta_3)$\\
   more cells};

\node[output] (ordinarybarcode) at (13,3.27)
  {ordinary\\
   persistence\\ barcode};

\draw[ordinary]
  (data) -- node[above,align=center]
  {geometric \\[-1.5mm] realization} (kt1);

\draw[ordinary]
  (kt1) -- node[above] {$\hookrightarrow$} (kt2);

\draw[ordinary]
  (kt2) -- node[above] {$\hookrightarrow$} (kt3);

\draw[ordinary]
  (kt3) -- node[above]
  {$\mathrm H_k(-;\FF)$} (ordinarybarcode);

\node at (6.75,4.35)
  {$\theta_1\geqslant\theta_2\geqslant\theta_3$};



\node[state] (k0) at (0,0.20)
  {$K_0$\\
   checkpoint $0$};

\node[bridge] (u01) at (2.5,0.20)
  {$K_0\cup K_1$\\
   union bridge};

\node[state] (k1) at (5,0.20)
  {$K_1$\\
   checkpoint $1$};

\node[bridge] (u12) at (7.5,0.20)
  {$K_1\cup K_2$\\
   union bridge};

\node[state] (k2) at (10,0.20)
  {$K_2$\\
   checkpoint $2$};

\node[output] (zigzagbarcode) at (13.3,0.20)
  {zigzag\\
   persistence\\ barcode};

\draw[zigzag] (k0) -- (u01);
\draw[zigzag] (k1) -- (u01);
\draw[zigzag] (k1) -- (u12);
\draw[zigzag] (k2) -- (u12);

\draw[ordinary]
  (k2) -- node[above]
  {\tiny $^{\mathrm H_k(-;\FF)}$} (zigzagbarcode);



\draw[dashed,gray,->]
  (6.75,2.52) --
  node[right,align=left,text=black]
  {fix a threshold\\[-1mm]and vary the checkpoint}
  (6.75,1.12);

\end{tikzpicture}
\caption{Ordinary persistence across reliability thresholds and zigzag persistence across training checkpoints}
\label{fig:cks mod}
\end{figure}

We use the set-theoretic view of knowledge space theory \cite{doignon1985,doignon1999}.
We introduce a \emph{cellular knowledge-state model}, abbreviated to the CKS model.
This model is not a standard part of classical knowledge space theory.
It places knowledge items, inference paths, and higher-order synthesis in one geometric object.
Previous work studied knowledge spaces by pretopology, decomposition, and gluing \cite{danilov2009}.
Our questions about cell attachment and homology are different.
We then extend the CKS model to capability spaces of large models.
Zigzag persistent homology describes acquisition, preservation, forgetting, recovery, and structural replacement.
Our results provide a modeling framework and testable statements.
We do not identify every homology class with a semantic capability.
We also do not interpret topological correlation as a causal mechanism.

\section{Preliminaries}\label{sec:preliminaries}

\textsl{This section follows the dependence of the mathematical concepts.
We first recall simplices and probability simplices.
We then introduce homology, persistence modules, and barcodes.
The last subsection defines the terms from large models.
The remarks provide intuitive explanations for readers from the other field.}

\subsection{Simplices and geometric models}

Let $v_0,\ldots,v_n\in\mathbb R^d$ be affinely independent.
The \defines{$n$-simplex} spanned by these points is
\[
[v_0,\ldots,v_n]=\left\{\sum_{i=0}^n a_iv_i\ \middle|\ a_i\geqslant0,\ \sum_{i=0}^na_i=1\right\}.
\]
A $k$-simplex spanned by $k+1$ of these vertices is called a $k$-face.
Obviously,
a $0$-simplex is a point,
a $1$-simplex is a closed line segment,
a $2$-simplex is a filled triangle,
and a $3$-simplex is a filled tetrahedron.
The coefficients $a_i$ are the weights of a point relative to the vertices.
Their nonnegativity and their sum place the point inside the convex region.

Let $E=\{e_1,\ldots,e_m\}$ be a finite set.
The \defines{probability simplex} on $E$ is
\[
\Delta(E)=\left\{(p_1,\ldots,p_m)\in\mathbb R^m\ \middle|\ p_i\geqslant0,\ \sum_{i=1}^m p_i=1\right\}.
\]
The vector $(p_1,\ldots,p_m)$ represents the probability distribution with mass $p_i$ at $e_i$.
For two outcomes, every distribution has the form $(p,1-p)$.
These distributions form a line segment.
For three outcomes, they form a filled triangle.
In general, distributions on $m$ outcomes form an $(m-1)$-simplex.
A vertex means that one outcome has probability one.
An interior point assigns positive probability to several outcomes.
The next-token distribution of a language model is a point in such a simplex.

\begin{definition}\label{def:simp-complex}\rm
A finite family $K$ of simplices is a \defines{simplicial complex} if the following conditions hold.
\begin{itemize}
\item Every face of a simplex in $K$ belongs to $K$.
\item For $\sigma,\tau\in K$, the intersection $\sigma\cap\tau$ is empty or is a common face.
\end{itemize}
The union $|K|=\bigcup\limits_{\sigma\in K}\sigma$ is the geometric realization of $K$.
\end{definition}

A simplicial complex glues points, segments, filled triangles, and higher simplices along common faces.
If a filled triangle is present, its edges and vertices must also be present.
This rule supports uniform computation.
It may be too restrictive when a whole relation and its subrelations have different meanings.
For this reason, we also use CW complexes in this paper.

\begin{definition}\label{def:cw-complex}\rm
A \defines{finite CW complex} $K$ is a space with a finite filtration
\[ \varnothing=K^{(-1)}\subseteq K^{(0)}\subseteq\cdots\subseteq K^{(N)}=K. \]
The space $K^{(0)}$ is a finite discrete set.
For each $n\geqslant 1$, the space $K^{(n)}$ is obtained from $K^{(n-1)}$ by attaching finitely many open $n$-balls.
An $n$-cell uses a closed ball $\mathbb{D}^n$ and a continuous attaching map
\[ f:\mathbb{S}^{n-1}=\partial \mathbb{D}^n \longrightarrow K^{(n-1)}. \]
The boundary is identified with its image under $f$.
The interior becomes an open $n$-cell.
The complex is \defines{regular} if the closure of every cell is homeomorphic to a closed ball and its boundary is a union of lower-dimensional cells \cite{hatcher2002}.
\end{definition}

The letter C refers to closure finiteness, and the letter W refers to the weak topology.
A finite CW complex can be built dimension by dimension.
To be precise, we first place points, and then we attach segments by their endpoints.
Next, we attach disks by their boundary circles.
A single disk cell can fill a loop made of many edges.
No triangulation of the disk is required.
Every finite simplicial complex is a regular CW complex.

\subsection{Persistent homology}

\begin{definition}\label{def:chains}\rm
Choose orientations for the cells of a finite CW complex $K$,  and fix a coefficient field $\FF$.
The cellular chain group $C_k(K;\FF)$ is the $\FF$-vector space with the $k$-cells as a basis.
The oriented cellular boundary extends linearly to $\partial_k: C_k(K;\FF)\to C_{k-1}(K;\FF)$.
We set
\[ Z_k(K)=\ker\partial_k, \quad B_k(K)=\operatorname{im}\partial_{k+1}.\]
Elements of $Z_k(K)$ and $B_k(K)$ are called \defines{$k$-cycles} and \defines{$k$-boundaries}, respectively.
We write $U\le V$ when $U$ is a vector subspace of $V$ in this paper.
\end{definition}

\begin{lemma}\label{lem:boundary-square}
For every $k$, we have $\partial_k\partial_{k+1}=0$.
It follows that $B_k(K)\le Z_k(K)$.
\end{lemma}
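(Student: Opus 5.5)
The plan is to reduce the identity $\partial_k\partial_{k+1}=0$ to a statement about a single cell, and then prove that statement using the regularity of $K$. Since $\partial_k\partial_{k+1}$ is $\FF$-linear and $C_{k+1}(K;\FF)$ has the $(k+1)$-cells as a basis, it suffices to show $\partial_k\partial_{k+1}(e)=0$ for one oriented $(k+1)$-cell $e$. In the regular setting of Definition \ref{def:cw-complex}, the cellular boundary takes the form
\[ \partial_{k+1}(e)=\sum_{f}[e:f]\,f, \]
where $f$ runs over the $k$-cells in the boundary of $e$. Each incidence number $[e:f]$ is $\pm1$, determined by comparing the chosen orientations, and is $0$ if $f$ does not lie in $\partial e$. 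Expanding $\partial_k\partial_{k+1}(e)$ then gives, for each $(k-1)$-cell $g$, the coefficient
\[ \sum_f [e:f][f:g]. \]
The task is to show that this sum vanishes for every $g$.

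The key step is the diamond property of regular CW complexes. Suppose $g\subseteq\partial e$ is a $(k-1)$-cell. Because the closure of $e$ is a closed ball whose boundary sphere is a union of cells, the link of $g$ in that sphere is a $0$-sphere. Hence exactly two $k$-cells $f_1,f_2$ satisfy $g\subseteq \bar f_i\subseteq\bar e$. We then check that the induced orientations give $[e:f_1][f_1:g]=-[e:f_2][f_2:g]$. This is the step I expect to be the main obstacle, since it depends on making precise how orientations are induced on boundaries.

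I would handle the orientation check by working in a local chart. Near an interior point of $g$, the ball $\bar e$ looks like a half-space model in which $f_1$ and $f_2$ are two half-hyperplanes meeting along $g$. In that model, the two induced orientations of $g$ are opposite, by the standard computation for the boundary of a manifold with corners. If $g$ does not lie in $\partial e$, every term is zero and there is nothing to check.

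As an alternative that avoids the orientation bookkeeping, I could instead use the standard identification of cellular chains with relative homology:
\[ C_k(K)=\Homolo_k(K^{(k)},K^{(k-1)}). \]
Under this identification, $\partial_{k+1}$ is the composite
\[ \Homolo_{k+1}(K^{(k+1)},K^{(k)})\to \Homolo_k(K^{(k)})\to \Homolo_k(K^{(k)},K^{(k-1)}). \]
Then $\partial_k\partial_{k+1}$ contains the consecutive maps
\[ \Homolo_k(K^{(k)})\to \Homolo_k(K^{(k)},K^{(k-1)})\to \Homolo_{k-1}(K^{(k-1)}), \]
which are two successive maps in the long exact sequence of the pair $(K^{(k)},K^{(k-1)})$. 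Their composite is zero by exactness \cite{hatcher2002}. I would present this version as the proof, and give the combinatorial diamond argument as the intuition for regular complexes.

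Finally, the inclusion $B_k(K)\le Z_k(K)$ follows directly. If $b=\partial_{k+1}(c)$, then $\partial_k b=\partial_k\partial_{k+1}c=0$, so $b\in\ker\partial_k$. Both $B_k(K)$ and $Z_k(K)$ are subspaces of $C_k(K;\FF)$, so the inclusion is an inclusion of vector subspaces, as the notation $\le$ requires.
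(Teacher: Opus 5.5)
Your proof is correct, but it is not the argument the paper writes out. The paper does sign bookkeeping on an oriented simplex. A codimension-two face obtained by deleting $v_i$ and $v_j$ ($i<j$) arises twice, with coefficients $(-1)^i(-1)^{j-1}$ and $(-1)^j(-1)^i$, and these cancel. For genuine CW cells the paper only asserts that the degree-defined cellular boundary satisfies the same identity and cites Hatcher.

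Your main argument is exactly the content of that citation. You identify $C_k(K;\FF)$ with $\mathrm H_k(K^{(k)},K^{(k-1)};\FF)$ and observe that $\partial_k\partial_{k+1}$ factors through two consecutive maps of the long exact sequence of $(K^{(k)},K^{(k-1)})$. This buys full generality with no orientation bookkeeping: it covers any finite CW complex, regular or not, including non-simplicial cells such as polygonal $2$-cells. The price is that you must invoke the cellular boundary formula to match the paper's degree-defined $\partial$ with the composite of the connecting map and the quotient map. The paper's computation is fully elementary but literally covers only simplices.

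Your diamond argument sits between the two. It is the paper's ``each codimension-two face appears twice with opposite signs'' transplanted to regular cells, and the exactly-two-$k$-cells property does hold in regular complexes. Its sign step cannot rest on a manifold-with-corners picture, however, because a closed cell of a regular CW complex is only a topological ball. The identity $[e:f_1][f_1:g]=-[e:f_2][f_2:g]$ is really the local form of $\partial^2=0$, usually derived from it rather than used to prove it. So you are right to present it only as intuition.

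Your deduction of $B_k(K)\le Z_k(K)$ is the same as the paper's.
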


This lemma is well-known and it can be proved by direct computation.
Indeed, consider the oriented simplex $[v_0,\ldots,v_{k+1}]$. Its boundary is
\[ \partial_{k+1}[v_0,\ldots,v_{k+1}]=\sum_{i=0}^{k+1}(-1)^i[v_0,\ldots,\widehat v_i,\ldots,v_{k+1}]. \]
Each codimension-two face appears twice after a second boundary is taken.
Assume $i<j$. Deleting $v_i$ before $v_j$ has coefficient $(-1)^i(-1)^{j-1}$,
and deleting $v_j$ before $v_i$ has coefficient $(-1)^j(-1)^i$.
The two coefficients have opposite signs. Thus all terms cancel.
The cellular boundary is defined by degrees of attaching maps and satisfies the same identity \cite{hatcher2002}.
Moreover,  take $b\in B_k(K)$. By Definition \ref{def:chains}, there is $c\in C_{k+1}(K;\FF)$ such that $b=\partial_{k+1}c$.
Thus $\partial_kb=\partial_k\partial_{k+1}c=0$. Hence $b\in Z_k(K)$.

\begin{definition}\label{def:homology}\rm
Lemma~\ref{lem:boundary-square} shows that the quotient $Z_k(K)/B_k(K)$ is defined.
The $k$th homology group and the $k$th Betti number are
\[ \mathrm H_k(K;\FF)=Z_k(K)/B_k(K),\quad \beta_k(K)=\dim_{\FF}\mathrm H_k(K;\FF). \]
Two cycles represent the same homology class if and only if their difference is the boundary of a chain of one higher dimension.
\end{definition}

For a geometric object $K$ considered here,
$\mathrm H_0(K;\FF)$ records connected components,
$\mathrm H_1(K;\FF)$ records closed loops which are not filled by two-dimensional chains,
and $\mathrm H_2(K;\FF)$ records closed two-dimensional shells which are not filled by three-dimensional chains.
In general, $\mathrm H_n(K;\FF)$ detects $n$-cycles with zero boundary which are not boundaries of higher-dimensional chains \cite{hatcher2002,edelsbrunner2010}. Cf. \Pic \ref{fig:low-homology}.

\begin{figure}[htbp]
\centering
\begin{tikzpicture}
\begin{scope}[xshift=0cm]
\fill (-1.00, 0.00) circle (2.2pt);
\fill (-0.55, 0.45) circle (2.2pt);
\fill ( 0.75,-0.15) circle (2.2pt);
\fill ( 1.05, 0.35) circle (2.2pt);
\draw (-1.00, 0.00)--(-0.55, 0.45);
\draw ( 0.75,-0.15)--( 1.05, 0.35);
\node at (0,-1.05) {$\beta_0=2$: components};
\end{scope}
\begin{scope}[xshift=5.1cm]
\draw[very thick] (0,0) circle (0.72); \node at (0,-1.05) {$\beta_1=1$: unfilled loop};
\end{scope}
\begin{scope}[xshift=10.2cm]
\shade[ball color=gray!20,opacity=.75] (0,0) circle (0.72);
\draw[very thick] (0,0) circle (0.72);
\draw[dashed] (-0.72,0) arc (180:360:0.72 and 0.22);
\draw (0.72,0) arc (0:180:0.72 and 0.22);
\node at (0,-1.05) {$\beta_2=1$: enclosed void};
\end{scope}
\end{tikzpicture}
\caption{Geometric intuition for low-dimensional homology;
the precise objects are the quotient spaces in Definition \ref{def:homology}}
\label{fig:low-homology}
\end{figure}

\begin{proposition}\label{prop:homology-functoriality}
Let $f:K\to L$ be a cellular map, and $f_{\bullet}:C_k(K;\FF)\to C_k(L;\FF)$ be its induced chain map.
Then $f$ induces a well-defined linear map
\[ \mathrm H_k(f):\mathrm H_k(K;\FF) \longrightarrow \mathrm H_k(L;\FF),\qquad [z]\longmapsto[f_{\bullet}z]. \]
The identity map induces the identity homomorphism.
Moreover, $\mathrm H_k(g\circ f)=\mathrm H_k(g)\circ\mathrm H_k(f)$.
\end{proposition}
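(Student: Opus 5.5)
The plan is to reduce everything to the single algebraic fact that $f_{\bullet}$ is a chain map, meaning $\partial^L_k f_{\bullet}=f_{\bullet}\partial^K_k$ for every $k$. This identity is part of what ``induced chain map'' means for a cellular map. Concretely, $f_\bullet$ is obtained from the map of pairs $(K^{(k)},K^{(k-1)})\to(L^{(k)},L^{(k-1)})$ on relative homology, and the cellular boundary is the connecting map of the triple $(K^{(k)},K^{(k-1)},K^{(k-2)})$. Naturality of connecting maps therefore gives the commutation. I will take this as the input from \cite{hatcher2002}, exactly as the paper does for the cellular version of Lemma~\ref{lem:boundary-square}, and will not reprove it. Everything else is linear algebra on the quotients of Definition~\ref{def:homology}.

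The first step shows that $f_\bullet$ preserves cycles and boundaries. If $z\in Z_k(K)$, then $\partial_k f_\bullet z=f_\bullet\partial_k z=0$, so $f_\bullet z\in Z_k(L)$. If $b=\partial_{k+1}c\in B_k(K)$, then $f_\bullet b=\partial_{k+1}(f_\bullet c)\in B_k(L)$. Since $f_\bullet$ is linear, it restricts to a linear map $Z_k(K)\to Z_k(L)$ that carries $B_k(K)$ into $B_k(L)$.

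The second step proves well-definedness. Suppose $[z]=[z']$, so that $z-z'=\partial_{k+1}c$ for some chain $c$. Then $f_\bullet z-f_\bullet z'=\partial_{k+1}f_\bullet c$, and hence $[f_\bullet z]=[f_\bullet z']$. Equivalently, the composite $Z_k(K)\to Z_k(L)\to \mathrm H_k(L;\FF)$ kills $B_k(K)$, so it factors through the quotient. Linearity of $\mathrm H_k(f)$ follows from linearity of $f_\bullet$ and of the quotient maps.

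The third step establishes functoriality. The identity cellular map induces the identity on each $C_k$, so $\mathrm H_k(\mathrm{id})[z]=[z]$. For the composite, I will use that $(g\circ f)_\bullet=g_\bullet\circ f_\bullet$ at the chain level, which is functoriality of relative homology applied to the maps of pairs. Then
\[
\mathrm H_k(g\circ f)[z]=[g_\bullet f_\bullet z]=\mathrm H_k(g)[f_\bullet z]=\mathrm H_k(g)\bigl(\mathrm H_k(f)[z]\bigr).
\]

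The main obstacle is not in these manipulations. It lies in justifying the two chain-level inputs, namely the chain-map identity and $(g\circ f)_\bullet=g_\bullet f_\bullet$. These depend on how $f_\bullet$ is defined for cellular maps, in particular on the degrees of the composites of attaching maps with $f$. I would cite both as standard from \cite{hatcher2002} rather than derive them from degree formulas. For regular complexes the degrees are $0$ or $\pm1$, and a direct check would also be feasible there.
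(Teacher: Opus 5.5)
Your proposal is correct and follows the paper's proof essentially step for step. Both use the chain-map identity $\partial f_\bullet=f_\bullet\partial$ to send cycles to cycles and get well-definedness from $f_\bullet z-f_\bullet z'=\partial(f_\bullet c)$. Both then reduce the identity and composition statements to $\mathrm{id}_\bullet=\mathrm{id}$ and $(g\circ f)_\bullet=g_\bullet f_\bullet$ at the chain level, taken as standard from \cite{hatcher2002}.

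One small caution about your closing aside, which your proof does not actually use. Regularity makes the \emph{incidence numbers} of the cellular boundary equal to $0$ or $\pm1$, but the cellular degrees defining $f_\bullet$ can be arbitrary integers even between regular complexes. For instance, mapping an interval twice around a circle subdivided into two edges $e_1,e_2$ gives $f_\bullet(e)=2e_1+2e_2$. So a ``direct check'' in the regular case would not simplify in the way you suggest.
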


Indeed, the chain-map identity is $\partial f_{\bullet}=f_{\bullet}\partial$.
If $z\in Z_k(K)$, then $\partial(f_{\bullet}z)=f_{\bullet}(\partial z)=0$.
Thus $f_{\bullet}z\in Z_k(L)$.
If $[z]=[z']$, Definition~\ref{def:homology} provides $c\in C_{k+1}(K;\FF)$ with $z-z'=\partial c$.
It follows that $f_{\bullet}z-f_{\bullet}z'=\partial(f_{\bullet}c)\in B_k(L)$.
Then $[f_{\bullet}z]=[f_{\bullet}z']$.
Linearity and compatibility with identities and compositions follow from the corresponding properties of chain maps.

\begin{definition}\label{def:persistence-module}\rm
A \defines{finite filtration} is a nested sequence of subcomplexes
\[ K_0\hookrightarrow K_1\hookrightarrow\cdots\hookrightarrow K_T. \]
Proposition~\ref{prop:homology-functoriality} provides linear maps and the $k$th \defines{persistence module}
\[
\mathrm H_k(K_0)\longrightarrow\mathrm H_k(K_1)\longrightarrow\cdots\longrightarrow\mathrm H_k(K_T).
\]
Persistent homology studies the birth, continuation, and death of homology classes in this module.
\end{definition}

\begin{definition}\label{def:interval-module}\rm
For an integer interval $[a,b]\subseteq\{0,\ldots,T\}$, the \defines{interval module} $I[a,b]$ is $\FF$ at every index in $[a,b]$ and zero elsewhere.
Every structure map inside the interval is the identity.
The multiset of intervals in an interval decomposition is called a \defines{barcode}.
Each horizontal bar records the index range of one class.
Its left endpoint is the birth index, and its right endpoint is the death index.
\end{definition}

\begin{example}\label{ex:point-cloud}\rm
Recall that a \defines{point cloud} is a finite set in a Euclidean space, and
it records coordinates without prescribing edges or faces.
Take ten points $P=\{p_1,\ldots,p_{10}\}\subset\mathbb R^2$ near a circle.
For $r \geqslant 0$, set
\[ U_r = \bigcup_{i=1}^{10}\overline B(p_i,r). \]
Thus, every point is covered by a closed disk of radius $r$.
It is clear that $U_r\subseteq U_s$ holds for all $r\leqslant s$.
The increasing radius provides a filtration, and the corresponding \v{C}ech complex $\check C_r(P)$ has the points as vertices.
A simplex is inserted when the associated disks have a common point.
All disks and their finite intersections are convex.
The nerve theorem shows that $|\check C_r(P)|\simeq U_r$ \cite{edelsbrunner2010}.
Figure \ref{fig:point-cloud-filtration} shows three stages.
For small $r$, there are ten connected components.
When adjacent disks overlap around the circle, there is one component and one central hole.
For larger $r$, the central hole is filled.
Nine $\mathrm H_0(\check C_r(P))$ classes die during merging,
one $\mathrm H_0(\check C_r(P))$ class survives;
and one $\mathrm H_1(\check C_r(P))$ class is born when the ring forms and dies when the central hole is filled.
\end{example}

\begin{figure}[htbp]
\centering
\begin{tikzpicture}
\foreach \shift/\rad/\lab in {0/0.18/$r=r_1$, 4.8/0.31/$r=r_2$, 9.6/1.00/$r=r_3$}
{
  \begin{scope}[xshift=\shift cm]
  \foreach \a in {0,36,...,324}
  {
    \filldraw[fill=blue!18,draw=blue!25, opacity=0.4]
    ({cos(\a)},{sin(\a)}) circle (\rad);
    \fill ({cos(\a)},{sin(\a)}) circle (1.55pt);
  }
  \draw[gray!50,dashed] (0,0) circle (1);
  \node at (0,-1.55) {\lab};
  \end{scope}
}
\foreach \x in {0,36,...,324}
{
  \fill[shift={(9.60,0)}][rotate around = {\x:(0,0)}] (1,0) circle (1.55pt);
}
\node at ( 0.00,-2.08) {ten components};
\node at ( 4.80,-2.08) {one component and one hole};
\node at ( 9.95,-2.08) {one component and no hole};
\end{tikzpicture}
\caption{Disk expansion for a ten-point cloud, where the radii are schematic.}
\label{fig:point-cloud-filtration}
\end{figure}

\begin{figure}[htbp]
\centering
\begin{tikzpicture}
\draw[->] (0,0)--(9,0) node[right] {$r$};
\draw (0,0) node{$|$} node[below] {$0$};
\draw (2,0) node{$|$} node[below] {$r_2$};
\draw (8,0) node{$|$} node[below] {$r_3$};
\foreach \x in {0.2, 0.4, 0.6, 0.8, 1.0, 1.2, 1.4, 1.6, 1.8}
\draw[black][shift={(0, \x)}][line width =  1pt] (0,0)--(2,0);
\draw[black][shift={(0,2.0)}][line width =  1pt] (0,0)--(9,0);
\draw[black][shift={(0,2.2)}][line width =1.5pt] (2,0)--(8,0) [red];
\draw[black] (0,1.0) node[left]{$\mathrm H_0$};
\draw[ red ] (2,2.2) node[above]{$\mathrm H_1$};
\end{tikzpicture}
\caption{A schematic barcode for Example \ref{ex:point-cloud}}
\label{fig:point-cloud-barcode}
\end{figure}

\begin{remark}\rm
Topological data analysis, abbreviated to TDA, studies data through geometric or combinatorial objects and algebraic invariants of these objects \cite{edelsbrunner2010}. A persistence-based TDA construction has four parts. One first specifies the observed data. One then constructs a filtered family of spaces or complexes. Homology converts the filtration into a persistence module. The interval decomposition in Theorem~\ref{thm:persistence-decomposition} produces a barcode.
Example~\ref{ex:point-cloud} follows this pipeline. The observed data are the coordinates in $P$. The radius $r$ constructs the nested family $\{\check C_r(P)\}_{r\geqslant0}$. The homology groups record components and holes. Figure~\ref{fig:point-cloud-barcode} summarizes their persistence. The construction rule from data to a filtration is part of the model. The barcode is therefore interpreted relative to that rule.
\end{remark}

We represent the vector spaces and structure maps from Definition~\ref{def:persistence-module} as matrices. Successive basis changes that preserve the index order and commute with the structure maps track each basis vector through a consecutive range of indices. Thus, this process yields the interval modules of Definition~\ref{def:interval-module}. A full matrix reduction is provided in \cite{zomorodian2005,edelsbrunner2010}.

For $0\leqslant i\leqslant j\leqslant T$, let $r(i,j)=\rank(V_i\to V_j)$. Then the interval module $I[a,b]$ contributes one to $r(i,j)$ exactly when $a\leqslant i\leqslant j\leqslant b$. Hence, if $\mu(a,b)$ denotes the multiplicity of $I[a,b]$, then
\[ \mu(a,b)=r(a,b)-r(a-1,b)-r(a,b+1)+r(a-1,b+1), \]
where $r(-1,b)=0$ and $r(a,T+1)=0$. Consequently, the ranks of the composite maps determine every interval multiplicity. The matrix reduction gives existence, while the rank formula determines the multiplicities and hence uniqueness. Therefore, we have the following theorem:

\begin{theorem}[{\!\!\cite{zomorodian2005}}]\label{thm:persistence-decomposition}
Every pointwise finite-dimensional persistence module on a finite index set is a finite direct sum of interval modules.
The multiset of intervals is unique up to reordering.
\end{theorem}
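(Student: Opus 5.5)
We prove existence by induction on the length $T$ of the index set, and uniqueness by the rank formula stated just before the theorem. Write the module as $V_0\xrightarrow{f_0}V_1\xrightarrow{f_1}\cdots\xrightarrow{f_{T-1}}V_T$ with every $V_i$ finite dimensional over $\FF$. The plan is to choose bases of all the $V_i$ at once so that each structure map sends every basis vector either to a basis vector or to zero. It must also be injective on the basis vectors it does not kill. Once such bases exist, following a basis vector forward gives a chain $x_a\mapsto x_{a+1}\mapsto\cdots\mapsto x_b\mapsto0$. Each chain spans a copy of $I[a,b]$, and the whole module is the direct sum of these chains.

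\textbf{Existence.} The case $T=0$ is trivial. For the inductive step, assume $V_0\to\cdots\to V_{T-1}$ already has an adapted basis, that is, a decomposition $\bigoplus_j I[a_j,b_j]$ with $b_j\le T-1$. Let $\mathcal B$ be the set of generators at index $T-1$, one for each bar with $b_j=T-1$. It is not enough to push $\mathcal B$ forward, because $f_{T-1}(\mathcal B)$ may be linearly dependent. The fix is to re-choose the basis of $V_{T-1}$, ordering the bars ending at $T-1$ by birth index with the oldest first. We then run a Gaussian elimination on the images $f_{T-1}(x)$. If the image of a generator is a linear combination of images of generators from bars born \emph{no later}, we subtract that combination. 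The resulting vector lies in $\ker f_{T-1}$, and that bar is declared to die at $T-1$.

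The point needing care is that this subtraction must be propagated backward. A bar born at $a$ may only subtract generators of bars born at $a'\le a$, since those generators have preimages at index $a$. We therefore apply the same change of basis at every index from $a$ to $T-1$. This keeps the basis adapted to all the earlier maps.

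After the elimination, the surviving images are linearly independent in $V_T$. We extend them to a basis of $V_T$, and each new vector starts a bar $I[T,T]$. This step, organizing the elimination by birth order so that the basis changes commute with the structure maps, is the main obstacle. It is exactly the matrix reduction of \cite{zomorodian2005}. I expect to present it as a column reduction of the block matrix of $f_{T-1}$, using only left-to-right column operations.

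\textbf{Uniqueness.} Suppose we are given any interval decomposition. The summand $I[a,b]$ contributes $1$ to $r(i,j)=\rank(V_i\to V_j)$ exactly when $a\le i\le j\le b$. So $r(i,j)=\#\{(a,b): a\le i,\ j\le b\}$, counted with multiplicity. Inclusion–exclusion then gives
\[
\mu(a,b)=r(a,b)-r(a-1,b)-r(a,b+1)+r(a-1,b+1).
\]
The ranks $r(i,j)$ are invariants of the module, independent of any decomposition. Hence the multiplicities $\mu(a,b)$ are determined by the module, and the multiset of intervals is unique up to reordering.
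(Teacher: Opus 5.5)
Your proposal is correct and follows the same route as the paper. Existence comes from basis changes that respect the index order and commute with the structure maps, which is the matrix reduction of \cite{zomorodian2005} that the paper only cites and you make explicit as an induction on $T$ with elimination by birth order. Uniqueness comes from the same rank formula $\mu(a,b)=r(a,b)-r(a-1,b)-r(a,b+1)+r(a-1,b+1)$; the one detail to add is the boundary convention $r(-1,b)=r(a,T+1)=0$ that the formula needs at the ends.
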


The stability theorem for persistence diagrams under small perturbations of a filtering function is proved in \cite{cohensteiner2007}.
It controls geometric perturbations under its assumptions, and does not establish the semantic correctness of capability labels used below.

\begin{definition}\label{def:zigzag-module}\rm \
\begin{itemize}
  \item[(1)] Suppose adjacent spaces are related by inclusions, but the direction can be chosen at every step.
The diagram
\[ K_0\longleftrightarrow K_1\longleftrightarrow\cdots\longleftrightarrow K_T \]
induces a representation of a quiver of type $\mathbb{A}_{T+1}$ after homology is applied,
and we call this representation a \defines{zigzag persistence module}.

  \item[(2)]
Let $V=(V_i,\phi_i)$ and $W=(W_i,\psi_i)$ have the same arrow directions.
Their direct sum is defined by $(V\oplus W)_i=V_i\oplus W_i$ and structure maps $\phi_i\oplus\psi_i$.
A nonzero zigzag module is \defines{indecomposable} if it is not a direct sum of two nonzero zigzag modules.
Krull--Schmidt uniqueness \cite{carlsson2010} means that the isomorphism types and multiplicities of indecomposable factors are unique.

  \item[(3)] For an integer interval $[a,b]\subseteq\{0,\ldots,T\}$, the zigzag interval module $I[a,b]$ is $\FF$ at the indices in $[a,b]$ and zero elsewhere. Every arrow whose two endpoints lie in $[a,b]$ is the identity map in the prescribed arrow direction, and every other structure map is zero.
\end{itemize}
\end{definition}

\begin{theorem}\label{thm:zigzag-decomposition}
Every finite-dimensional zigzag persistence module over $\FF$ is a direct sum of interval modules,
and the interval multiset is unique up to reordering \cite{carlsson2010}.
Here, ordinary persistence is the special case in which all arrows point to the right.
\end{theorem}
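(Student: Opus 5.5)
The plan follows Gabriel's theorem together with the reflection-functor argument of \cite{carlsson2010}, with a direct inductive construction as a backup. A zigzag persistence module in the sense of Definition~\ref{def:zigzag-module} is a finite-dimensional representation $V$ of a quiver $Q$ whose underlying graph is $\mathbb{A}_{T+1}$, with arbitrary orientation. The proof has three parts: existence of a decomposition into indecomposables, identification of the indecomposables with the interval modules $I[a,b]$, and uniqueness of the multiset.

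\textbf{Existence and uniqueness of some decomposition.} First, induct on $\dim V=\sum_i\dim V_i$ to write $V$ as a finite direct sum of indecomposables. If $V$ is indecomposable we are done; otherwise split $V$ into two nonzero summands, each of strictly smaller total dimension. Next, $\End(V)$ is a finite-dimensional $\FF$-algebra. Via Fitting's lemma, every endomorphism of an indecomposable module is nilpotent or invertible, so the endomorphism ring of an indecomposable is local. Krull--Schmidt then gives uniqueness of the indecomposable factors up to isomorphism and multiplicity, which is exactly statement (2) of Definition~\ref{def:zigzag-module}.

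\textbf{Indecomposables are intervals.} This is the main step. Let $V$ be indecomposable. I would show that every structure map of $V$ is injective or surjective in a compatible way, so that $V$ is "thin", by induction on $T$ using an end vertex. Consider the vertex $0$, with arrow $\alpha$ between $V_0$ and $V_1$.
\begin{itemize}
\item If $\alpha:V_0\to V_1$, then $\ker\alpha$ splits off as a direct summand supported at vertex $0$. Indecomposability forces $V=I[0,0]$ or $\alpha$ injective.
\item If $\alpha:V_1\to V_0$, then a complement of $\operatorname{im}\alpha$ splits off. This forces $V=I[0,0]$ or $\alpha$ surjective.
\end{itemize}
In the remaining case, restrict $V$ to the vertices $1,\ldots,T$ and decompose it by the induction hypothesis into intervals. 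The map $\alpha$ then defines a gluing datum: a subspace of, or quotient of, $V_1$, expressed in terms of the intervals through vertex $1$.

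The obstacle is showing that one can change basis within the restricted decomposition so that $\alpha$ is "diagonal" with respect to the intervals. The basis change must use automorphisms of the restriction, that is, triangular changes among the intervals containing $1$, ordered by their right endpoints or directions. I would try the following. Order the intervals $[1,b]$ by how long they survive in the given orientation. Among intervals meeting vertex $1$, homomorphisms $I[1,b]\to I[1,b']$ exist in only one direction, depending on arrow orientations. This gives a filtration of $V_1$ on which the automorphism group acts by block-triangular matrices. Then perform Gaussian elimination on the image of $V_0$, respectively the kernel of $\alpha$, relative to this flag, choosing pivots compatible with it. After that, $V$ becomes a sum of intervals $[0,b]$ and $[1,b]$ and the intervals not meeting vertex $1$. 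Indecomposability leaves exactly one interval.

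\textbf{Alternative and uniqueness of the multiset.} As an alternative to the Gaussian-elimination step, I would quote Gabriel's theorem: indecomposables of an $\mathbb{A}_n$ quiver correspond bijectively to positive roots $e_a+\cdots+e_b$, whose unique representation is $I[a,b]$. Finally, uniqueness of the interval multiset follows from Krull--Schmidt, since distinct $[a,b]$ give nonisomorphic modules: their dimension vectors differ. For all arrows pointing right, the statement recovers Theorem~\ref{thm:persistence-decomposition}, consistent with the last sentence of the theorem.
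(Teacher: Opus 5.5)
The paper does not prove this theorem. It defers entirely to \cite{carlsson2010}, that is, to Gabriel's classification for quivers of type $\mathbb{A}_{T+1}$, and records uniqueness as Krull--Schmidt in Definition~\ref{def:zigzag-module}(2). Your fallback of quoting Gabriel's theorem is exactly that citation. Your main route is different and self-contained, and it is correct:
\begin{itemize}
\item Fitting's lemma and Krull--Schmidt give a unique decomposition into indecomposables.
\item An induction on the number of vertices shows each indecomposable is an interval. You reduce the end arrow to the injective or surjective case, then glue it onto an interval decomposition of the restriction to $\{1,\dots,T\}$ after a basis change inside that restriction's automorphism group.
\end{itemize}
This buys an elementary, essentially algorithmic proof without root systems or reflection functors; the citation buys brevity and the general classification. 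It is also the right kind of argument here. The sketch the paper gives for Theorem~\ref{thm:persistence-decomposition} (matrix reduction plus the rank formula for $\mu(a,b)$) does not transfer verbatim to zigzags, because there is no composite map $V_i\to V_j$ once the arrows change direction. So uniqueness has to come from Krull--Schmidt and dimension vectors, as you do.

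When you write out the gluing step, tighten a few points.
\begin{itemize}
\item The order on the summands $I[1,b]$ is not by length. For $b<b'$ the nonzero homomorphism goes $I[1,b]\to I[1,b']$ if the arrow between $b$ and $b+1$ points towards $b$, and $I[1,b']\to I[1,b]$ if it points towards $b+1$. For $1\to2\leftarrow3$ one gets $I[1,2]\to I[1,3]\to I[1,1]$.
\item Transitivity of this relation holds because composites of such maps are nonzero scalars at vertex $1$. So you really do get a flag on $V_1$.
\item Every block-triangular matrix for this flag must lift to an automorphism of the restricted module. It does: each allowed entry is the vertex-$1$ component of an interval homomorphism, and the lift is triangular with invertible diagonal blocks.
\item The Gaussian elimination is then the standard fact that a flag and the subspace $\alpha(V_0)$, respectively $\ker\alpha$, have a common adapted basis.
\item Krull--Schmidt yields uniqueness of the interval multiset only once you note that each $I[a,b]$ is indecomposable. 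This follows from $\End(I[a,b])\cong\FF$, since the support is connected and all internal maps are identities.
\end{itemize}
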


\subsection{Large models}

We now define the terms from language modeling. The explanations are intentionally concrete, and they are included for readers who do not work in machine learning.

\begin{definition}\label{def:tokenizer}\rm
A \defines{tokenizer} is a map
\[ \operatorname{tok}:\Sigma^*\longrightarrow V^*, \]
where:
\begin{itemize}
  \item $\Sigma$ is a \defines{finite character alphabet};
  \item $\Sigma^*$ is the set of \defines{finite strings over $\Sigma$};
  \item and $V=\{v_1,\ldots,v_m\}$ is a finite \defines{token vocabulary}.
\end{itemize}
For a string $x$, the ordered list $\operatorname{tok}(x)=(t_1,\ldots,t_n)$ is its \defines{token sequence}.
\end{definition}

The alphabet $\Sigma$ contains written characters which can occur in raw text.
The vocabulary $V$ is the finite collection of pieces recognized by the model.
A token may be a whole word, part of a word, one Chinese character, punctuation, or a special control symbol.
Tokenization means that the written string is divided and replaced by these pieces.
For example, the phrase ``persistent homology'' may become two whole-word tokens or several subword tokens.
The tokenizer depends on the chosen vocabulary and algorithm \cite{sennrich2016}.
A token sequence is the ordered list which the model reads.
It is comparable to a sentence written with model-recognized pieces, but it need not agree with the grammatical division into words.
In an implementation, every token is stored by an integer index.

\begin{definition}\label{def:autoregressive-lm}\rm
An \defines{autoregressive language model} with parameters $\theta$ assigns a conditional distribution
\[
p_\theta(\,\cdot\mid t_1,\ldots,t_j)\in\Delta(V)
\]
to every token prefix $(t_1,\ldots,t_j)$.
For a sequence $(t_1,\ldots,t_n)$, its likelihood is
\[
p_\theta(t_1,\ldots,t_n)=\prod_{j=1}^n p_\theta(t_j\mid t_1,\ldots,t_{j-1}).
\]
Let the training corpus consist of token sequences $(t_{i,1},\ldots,t_{i,n_i})$ for $1\leqslant i\leqslant N$.
Training often minimizes the empirical negative log-likelihood
\begin{align}\label{eq:nll}
\mathcal L(\theta)&=-\frac1N\sum_{i=1}^N \sum_{j=1}^{n_i}\log p_\theta(t_{i,j}\mid t_{i,<j}).
\end{align}
\end{definition}

A language model predicts the next token from an existing token sequence, outputs a probability distribution, and may sample a new token during generation. During training, an optimization algorithm adjusts the neural network parameters through a loss function so that the probability of the true next token is as high as possible.
To be more precise, the model receives an existing token sequence and predicts the next token.
Its output assigns a probability to every item in the vocabulary $V$.
By the definition of a probability simplex, these probabilities are nonnegative and sum to $1$,
so the output lies in $\Delta(V)$. Then, a token is sampled from this distribution and appended to the sequence.
Repeating this operation allows the model to generate text one token at a time.
The parameter vector $\theta$ contains the numerical weights of the neural network.
The loss in \eqref{eq:nll} penalizes the model when it assigns low probability to the true next token.
Therefore, minimizing this loss adjusts $\theta$ so that the probability of the true next token is as high as possible.

A Transformer is a neural-network architecture which uses attention to combine information from different positions of a token sequence \cite{vaswani2017}.
In this article, a \defines{large model} means a Transformer-based language model with many parameters and broad training data.
The adjective ``large'' has no universal mathematical cutoff.
Our topology does not depend on a particular parameter count.

\begin{definition}\label{def:training-trajectory}\rm
A \defines{training trajectory} is a finite sequence of parameter vectors
\[ \theta_0,\theta_1,\ldots,\theta_T. \]
The model stored at $\theta_t$ is a \defines{checkpoint}.
If training continues from $\theta_t$ on a new data set or objective, the process is called \defines{fine-tuning}.
A decrease of earlier performance after later training is called \defines{catastrophic forgetting}.
\end{definition}

A checkpoint is a saved copy of the model at one moment during training.
It is not a new model. Indeed, it is the same model at a different stage.
The model is controlled by a very large collection of numbers.
These numbers are usually called weights or parameters.
Training is the process of adjusting these numbers.
Saving the current numbers gives a checkpoint.
Fine-tuning means that we continue training from an already trained model instead of starting from the beginning.
The new training stage may use new examples or a new objective.
It can teach the model a new subject, a new writing style, or a new way of responding to instructions.
Instruction tuning is a special case of fine-tuning.
In instruction tuning, the training examples are pairs of an instruction and a desired response.
The model therefore learns to follow human requests more reliably \cite{ouyang2022}.
Continued training can also have a negative effect.
The model may become worse at something it could do before.
For example, after learning to follow instructions, it might perform less well on an earlier mathematical task.
This loss of previously learned ability is called catastrophic forgetting \cite{luo2023}.

\begin{definition}\label{def:probe}\rm
A \defines{task} $q$ is a rule which assigns desired outputs to a class of inputs.
A \defines{prompt} is a token sequence supplied to a model as an instruction or context.
A \defines{probe set} $D_q=\{(x_i,y_i)\}_{i=1}^{N_q}$ is a finite scored sample for $q$.
For a checkpoint $\theta_t$, a \defines{behavioral score} is a number
\[ s_t(q;D_q)\in[0,1]. \]
It summarizes performance on the probe set.
\end{definition}

A task may ask the model to add two integers, translate a sentence, or answer a factual question.
The prompt is the actual text shown to the model.
The probe set is a finite examination paper with a scoring rule.
The score may be exact-match accuracy or another preregistered statistic.
Prompt wording can affect the result.
We therefore use several prompt paraphrases and random seeds in later definitions.

\begin{definition}\label{def:hidden-representation}\rm
For an input $x$, a checkpoint $t$, and a layer $\ell$, let $h_{t,\ell}(x)\in\mathbb R^d$
be the corresponding \defines{hidden representation}.
For fixed inputs $x_1,\ldots,x_N$, the set $\{h_{t,\ell}(x_i)\}_{i=1}^N$ is a point cloud in $\mathbb R^d$.
\end{definition}

A hidden representation is the internal coordinate record produced by a single layer.
It is neither a word nor a probability distribution.
Vectors that are close to one another are often interpreted as encoding similar internal states.
This interpretation, however, must be tested for the model and task under study.
The resulting finite collection of vectors can be analyzed using the point-cloud construction in Example \ref{ex:point-cloud}.

\section{Geometric descriptions of large models and their persistent homology}\label{sec:model}

\textsl{This section builds a geometric model linking large-model measurements to persistent homology.
It defines knowledge spaces and the cellular knowledge-state model, using cells of a finite regular CW complex and treating admissible states as subcomplexes.
It specializes this to large-model capability spaces, where probe scores and a reliability threshold yield a capability complex per checkpoint.
A theorem (see Theorem \ref{thm:single-cell}) shows that attaching one $n$-cell can only create a class in $\mathrm H_n$ or kill one in $\mathrm H_{n-1}$.
Since capability complexes along training need not be nested, the section uses union and intersection bridges to form zigzag persistence modules, distinguishes checkpoint, transition, and bridge-sensitive classes, and computes a finite four-stage example.
It ends with a comparison of ordinary and zigzag persistence and reporting guidelines.}

\subsection{Persistent homology from knowledge learning to capability spaces}

\begin{definition}\label{def:knowledge-space}\rm
Let $Q$ be a finite \defines{problem domain}.
Its elements are elementary questions or knowledge items.
A \defines{knowledge state} is a subset $S\subseteq Q$.
A family $\mathcal K\subseteq 2^Q$ is a \defines{knowledge space} if:
\begin{itemize}
  \item $\varnothing,Q\in\mathcal K$;
  \item (union axiom) $\bigcup\limits_{\alpha\in A}S_\alpha\in\mathcal K$ for every family $\{S_\alpha\}_{\alpha\in A}\subseteq\mathcal K$.
\end{itemize}
Here, each set lying in $\mathcal K$ is called an \defines{admissible state}, see \cite{doignon1985,doignon1999}.
\end{definition}

The set $Q$ is a finite syllabus. A state $S$ lists the items mastered by one learner.
The union axiom says that the union of admissible states is admissible.
It does not say that every subset is a possible state. Prerequisites may exclude some subsets.

\begin{example} \rm
Let $Q=\{a,b\}$, where $a$ is a prerequisite for $b$. Then the possible knowledge space is
$\mathcal K = \{\varnothing, \{a\}, \{a,b\}\} \subseteq 2^Q$.
Note that $\{b\}\notin\mathcal K$, since the prerequisite $a$ is missing.
We check the union-closure property as: $\varnothing \cup \{a\} = \{a\} \in \mathcal K$,
$\{a\} \cup \{a,b\} = \{a,b\} \in \mathcal K$, and $\varnothing \cup \{a,b\} = \{a,b\} \in \mathcal K$.
Thus the condition is satisfied.
\end{example}

\begin{definition}\label{def:ambient-complex}\rm
An \defines{ambient knowledge complex} is a finite regular CW complex $K^*$ with a \defines{labeling map}
\[ \lambda:\Cell(K^*)\longrightarrow\mathcal L. \]
Here, $\mathcal L$ is a set of semantic labels.
Labels of $0$-cells represent atomic items.
Labels of $1$-cells represent pairwise inference paths or relations.
Labels of higher cells represent preregistered higher-order synthesis.
\end{definition}

The ambient knowledge complex $K^*$ is a finite regular CW complex.
Regularity means that the closure of each cell is homeomorphic to a ball
and that its boundary is formed by lower-dimensional cells.
This regular cell structure allows the complex to be built dimension by dimension.
On this basis, the labeling map $\lambda$ assigns a semantic label to each cell.
Labels of $0$-cells represent atomic items, labels of $1$-cells represent pairwise inference paths or relations,
and labels of higher cells represent preregistered higher-order synthesis.
Therefore, the complex $K^*$ describes all encoded relations.
It is an ambient library rather than the current state of one learner.
It is worth noting that ordinary homology only concerns topological structure and forgets the specific semantics of each cell.
The labeling map is precisely used to store this semantic information forgotten by homology.
The significance of the regularity assumption lies precisely in this.
It provides a regular cellular foundation for the labeling map and for the homology analysis above.

\begin{definition}\label{def:cks}\rm
A \defines{cellular knowledge-state model}, or \defines{CKS model}, is a triple $(K^*,\lambda,\mathcal S)$.
Here, $K^*$ and $\lambda$ are as in Definition~\ref{def:ambient-complex},
the set $\mathcal S$ is a family of CW subcomplexes of $K^*$,
and each $K_S\in\mathcal S$ represents one knowledge state.
The model satisfies the following conditions.
\begin{itemize}
\item A cell belongs to $K_S$ only if all cells in its boundary belong to $K_S$.
\item Every included cell has an operational mastery criterion fixed before data analysis.
\item Different labels may be placed on cells of the same dimension.
\end{itemize}
\end{definition}

A vertex is an atomic item, such as a basic concept or knowledge item.
An edge can record a mastered path or relation between two items.
A $2$-cell can record a higher-order synthesis that fills a loop formed by pairwise relations.
For example, if three knowledge items have pairwise inference paths among them,
a $2$-cell represents the synthesis of the three into a whole.
The subcomplex condition prevents a higher-order relation from appearing without its boundary evidence.
That is, one cannot master a higher-order synthesis from nothing but must first master all its constituent parts.
The label $\lambda$ preserves cognitive meaning.
Homology computation only concerns topological structure such as connectivity, loops, and holes,
and it ignores specific semantics. The label retains what knowledge content a cell represents.

In summary, the CKS model formalizes knowledge states in topological language.
The complex $K^*$ is the ambient complex of all legal knowledge relations.
Each subcomplex $K_S$ in $\mathcal S$ is the knowledge state of one learner.
The subcomplex condition ensures that knowledge mastery respects prerequisites encoded by cell boundaries.
The labeling map preserves semantics so that homology computation focuses on structure while cognitive meaning is not lost.

\begin{proposition}\label{prop:knowledge-vs-betti}
Neither the number of cells nor the Betti numbers determine the other.
In particular, more mastered cells need not produce a larger total Betti number.
\end{proposition}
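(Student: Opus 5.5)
The plan is to prove the statement by explicit counterexamples inside a small ambient complex. ``Determine'' will mean that one quantity fixes the other as a function. It is then enough to exhibit pairs of subcomplexes that agree in one quantity and differ in the other, in both directions. In addition, I will exhibit a pair $K_S\subseteq K_{S'}$ in which $K_{S'}$ has strictly more cells but a strictly smaller total Betti number $\sum_k\beta_k$. All Betti numbers are computed from cellular chains (Definitions~\ref{def:chains} and~\ref{def:homology}). In every example below only $\partial_1$ and $\partial_2$ matter, and their ranks are immediate.

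First, the cell count does not determine the Betti numbers. Take an ambient complex containing three vertices $a,b,c$ and the three edges of a triangle. Both subcomplexes below have three cells.
\begin{itemize}
\item The subcomplex $\{a,b,c\}$ with no edges has $\beta_0=3$ and $\beta_1=0$.
\item The subcomplex $\{a,b,[a,b]\}$ has $\beta_0=1$ and $\beta_1=0$.
\end{itemize}

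Second, the Betti numbers do not determine the cell count. A single vertex and a single closed edge $\{a,b,[a,b]\}$ both have $\beta_0=1$ and all higher Betti numbers zero. However, they have one and three cells, respectively. Both are subcomplexes of the same ambient complex, because each cell appears together with its boundary, as Definition~\ref{def:cks} requires.

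Third, I will show that more mastered cells need not give a larger total Betti number, using a nested pair. Let $K_S$ consist of the three vertices $a,b,c$ and the three edges of the triangle. It has $6$ cells, $\beta_0=1$, $\beta_1=1$, and total Betti number $2$. Let $K_{S'}=K_S\cup e^2$, where the $2$-cell $e^2$ is attached along the triangle loop, which is allowed in a regular CW complex. Then $K_{S'}$ has $7$ cells and is a closed disk. The boundary $\partial_2 e^2$ is the loop cycle, so $\rank\partial_2=1$ kills $\mathrm H_1$. Hence $\beta_0=1$, $\beta_1=0$, and the total Betti number drops to $1$. This gives a larger mastered set with a strictly smaller total Betti number.

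Each of these examples must also be realizable as knowledge states of a CKS model. This is immediate if the ambient complex is taken to be the filled triangle $K^*$ with arbitrary labels, since every example above is a subcomplex of it. There is no real obstacle. The only point needing care is that ``determine'' is made precise: no function from cell counts to Betti vectors, and none from Betti vectors to cell counts, is compatible with all subcomplexes.
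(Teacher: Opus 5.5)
Your proof is correct and follows essentially the same route as the paper, namely explicit small counterexamples in each direction: your closed edge versus a single vertex is the paper's tree versus a single vertex, and your three-cell pair (three isolated vertices versus one edge with its endpoints) replaces the paper's seven-cell pair (a filled triangle versus a triangle boundary with an isolated vertex). Your nested example (triangle boundary inside the filled triangle) is a slight strengthening, because it gives a strict drop in total Betti number under inclusion within one ambient complex, whereas the paper's vertex-versus-tree example only shows that the total Betti number need not increase.
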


\begin{proof}
Let $K$ be one vertex and let $L$ be a tree with many vertices and edges.
Since both spaces are connected and contractible, we have
$\beta_0(K)=\beta_0(L)=1$ and $\beta_i(K)=\beta_i(L)=0$ for $i\geqslant1$.
However, their cell counts can be arbitrarily different.

For the converse, let $K'$ be a filled triangle and let $L'$ be the disjoint union of the boundary of a triangle and one isolated vertex.
The complex $K'$ has three vertices, three edges, and one two-cell, while $L'$ has four vertices and three edges. Thus both complexes have seven cells.
However, $K'$ is connected and contractible, whereas $\beta_0(L')=2$ and $\beta_1(L')=1$. Hence the number of cells does not determine the Betti numbers.
Cell count measures amount under the chosen encoding,
but Betti numbers measure organization and holes.
\end{proof}

\begin{definition}\label{def:relative-homology}\rm
For a subcomplex $K\subseteq L$, the \defines{relative $n$-chain group} is defined as the quotient $C_n(L,K;\FF) = C_n(L;\FF)/C_n(K;\FF)$.
The relative chain complex is
\[ C_{\bullet}(L,K;\FF) = (C_n(L,K;\FF),\partial_n^{\mathrm{rel}} )_{n\in\mathbb Z}.\]
Its homology is $\mathrm H_n(L,K;\FF)$.
The short exact sequence
\[0 \longrightarrow C_{\bullet}(K;\FF) \longrightarrow C_{\bullet}(L;\FF) \longrightarrow C_{\bullet}(L,K;\FF) \longrightarrow 0 \]
of chain complexes induces the long exact sequence
\begin{align}\label{eq:les}
\cdots\to\mathrm H_n(K)\to\mathrm H_n(L)\to\mathrm H_n(L,K)
&\xrightarrow{\delta}\mathrm H_{n-1}(K)\to\mathrm H_{n-1}(L)\to\cdots.
\end{align}
\end{definition}

\begin{theorem}\label{thm:single-cell}
Let $f: \partial \mathbb D^n = \mathbb S^{n-1} \to K$ be a continuous map from $\mathbb S^{n-1}$ to a finite CW complex $K$,
and $K'=K\cup_f e^n := (K \sqcup \mathbb D^n)/\sim$ be obtained from $K$ by attaching one $n$-cell,
where for each $x\in \mathbb S^{n-1}$, we define $x\sim f(x)$.
Assume $n\geqslant1$ and use coefficients in $\FF$.
Then only $\mathrm H_n$ and $\mathrm H_{n-1}$ can change. Let
\[ \delta:\mathrm H_n(K',K;\FF)\cong\FF\longrightarrow\mathrm H_{n-1}(K;\FF) \]
be the connecting map and let $r=\rank\delta\in\{0,1\}$. Then we have
\begin{align}\label{eq:bet-cha}
\beta_n(K')&=\beta_n(K)+1-r, \quad \beta_{n-1}(K')=\beta_{n-1}(K)-r,
\end{align}
and all other Betti numbers remain unchanged.
\end{theorem}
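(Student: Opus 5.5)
The plan is to apply the long exact sequence \eqref{eq:les} to the pair $(K',K)$ and then count dimensions. Throughout, every group has coefficients in $\FF$, so all groups are finite-dimensional vector spaces.

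The first step is to compute the relative homology $\mathrm H_m(K',K;\FF)$. I would use the standard identification of relative homology of a CW pair with reduced homology of the quotient: $(K',K)$ is a good pair, since $K$ is a subcomplex, so
\[ \mathrm H_m(K',K;\FF)\cong\widetilde{\mathrm H}_m(K'/K;\FF)\cong\widetilde{\mathrm H}_m(\mathbb S^n;\FF). \]
This holds because $K'/K\cong\mathbb D^n/\mathbb S^{n-1}\cong\mathbb S^n$. Equivalently, excision gives $\mathrm H_m(K',K)\cong\mathrm H_m(\mathbb D^n,\mathbb S^{n-1})$, and the long exact sequence of $(\mathbb D^n,\mathbb S^{n-1})$ evaluates this using the contractibility of $\mathbb D^n$. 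Either way, $\mathrm H_m(K',K;\FF)\cong\FF$ for $m=n$ and vanishes otherwise. The case $n=1$ needs a separate check, but the computation $\widetilde{\mathrm H}_m(\mathbb S^1)$ still gives $\FF$ exactly in degree $1$. Some care is needed here because $f$ is an arbitrary continuous map, so $K'$ need not be a CW complex with $K$ as a subcomplex in the cellular sense. I would therefore phrase the argument with singular homology, using the neighborhood $K\cup(\mathbb D^n\setminus\{0\})$, which deformation retracts onto $K$, and then apply excision. I expect this to be the only step requiring real justification.

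The second step reads off the long exact sequence. For $m\notin\{n,n-1\}$, the terms $\mathrm H_{m+1}(K',K)$ and $\mathrm H_m(K',K)$ on either side of $\mathrm H_m(K)\to\mathrm H_m(K')$ vanish. When $m=n-1$ or $m=n$ the neighbouring relative groups may be nonzero, so this isomorphism argument is only used for the other degrees, and there it gives $\beta_m(K')=\beta_m(K)$. For the remaining degrees the relevant segment is
\[ 0=\mathrm H_n(K)\!\to\!\cdots\quad\text{more precisely}\quad 0\to\mathrm H_n(K)\to\mathrm H_n(K')\to\FF\xrightarrow{\delta}\mathrm H_{n-1}(K)\to\mathrm H_{n-1}(K')\to 0. \]
The leftmost $0$ is $\mathrm H_{n+1}(K',K)$ and the rightmost $0$ is $\mathrm H_{n-1}(K',K)$.

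The final step counts dimensions. Exactness shows that the image of $\mathrm H_n(K')\to\FF$ is $\ker\delta$, which has dimension $1-r$. Since the map $\mathrm H_n(K)\to\mathrm H_n(K')$ is injective, this gives $\beta_n(K')=\beta_n(K)+1-r$. On the other side, $\mathrm H_{n-1}(K')\cong\mathrm H_{n-1}(K)/\operatorname{im}\delta$, so $\beta_{n-1}(K')=\beta_{n-1}(K)-r$. Finally, $r\in\{0,1\}$ because the domain of $\delta$ is one-dimensional. Together these establish \eqref{eq:bet-cha}.
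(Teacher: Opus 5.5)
Your proof is correct. Its skeleton is exactly the paper's: the long exact sequence of the pair $(K',K)$, relative homology concentrated in degree $n$, and the dimension count on the five-term segment.

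The one genuine difference is how you compute $\mathrm H_\ast(K',K;\mathbb F)$. The paper observes that the relative cellular chain complex $C_\bullet(K',K;\mathbb F)$ has a single basis vector, in degree $n$. This is immediate, but it tacitly treats $K'$ as a CW complex with $K$ as a subcomplex, which requires $f(\mathbb S^{n-1})\subseteq K^{(n-1)}$. The statement does not assume this. To cover the general case one would have to invoke cellular approximation together with the homotopy equivalence $K\cup_f e^n\simeq K\cup_g e^n$ rel $K$.

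Your route works for an arbitrary continuous $f$. It uses the open neighbourhood $K\cup(\mathbb D^n\setminus\{0\})$, which deformation retracts onto $K$, and then excision down to $(\mathbb D^n,\mathbb S^{n-1})$; equivalently, $\widetilde{\mathrm H}_\ast(K'/K)\cong\widetilde{\mathrm H}_\ast(\mathbb S^n)$. The cost is that you leave the cellular framework in which the paper defines relative homology and $\delta$ (Definition~\ref{def:relative-homology}). When $f$ is cellular, the two versions of $\delta$ correspond under the natural cellular--singular isomorphism, so $r$ is the same either way.

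Two small cleanups are needed. First, drop the opening justification ``since $K$ is a subcomplex'' and rely on the neighbourhood-deformation-retract argument you give afterwards. Second, delete the stray ``$0=\mathrm H_n(K)\to\cdots$'', which is false as written; the exact sequence displayed after it is the correct one.
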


\begin{proof}
The relative cellular chain complex $C_{\bullet}(K',K;\FF)$ has one basis vector in degree $n$ and zero in every other degree.
Thus, we have
\[ \mathrm H_i(K',K;\FF)\cong \begin{cases}\FF,&i=n,\\0,&i\neq n.\end{cases}\]
Applying the long exact sequence \eqref{eq:les} from Definition \ref{def:relative-homology},
we obtain that, for $i\notin\{n,n-1\}$, both adjacent relative groups vanish.
The map $\mathrm H_i(K)\to\mathrm H_i(K')$ is therefore an isomorphism.

The relevant part is
\[
0 \to \mathrm H_n(K) \xrightarrow{\spadesuit} \mathrm H_n(K')
\xrightarrow{\heart} \FF
\xrightarrow{\clubsuit} \mathrm H_{n-1}(K)
\xrightarrow{\diamo} \mathrm H_{n-1}(K')\to0.
\]
Here $\clubsuit = \delta$ is the connecting map given in \eqref{eq:les}; in particular, $\rank(\clubsuit)=r$.
Exactness shows that $\spadesuit$ is injective, and it also shows that $\Ima(\heart)=\Ker(\clubsuit)$.
Thus, the quotient $\mathrm H_n(K')/\Ima(\spadesuit)\cong\Ima(\heart)=\Ker(\clubsuit)$ has dimension $\dim\Ker(\delta)=1-r$ ($\leqslant\dim\FF=1$).
This proves the first formula given in \eqref{eq:bet-cha}.
Exactness also shows that $\diamo$ is surjective and $\Ker(\diamo)=\Ima(\clubsuit)$.
Therefore, $\mathrm H_{n-1}(K') = \Ima(\diamo) \cong \mathrm H_{n-1}(K)/\Ker(\diamo)$,
i.e., it is the quotient of $\mathrm H_{n-1}(K)$ by an $r$-dimensional subspace.
Then the second formula given in \eqref{eq:bet-cha} holds.
\end{proof}

By Theorem \ref{thm:single-cell}, we obtain the facts that
if $r=0$, then the attached cell creates one class in $\mathrm H_n$,
and if $r=1$, then it kills one class in $\mathrm H_{n-1}$.
An attached edge can create a loop or merge two connected components,
and an attached two-cell can fill a one-dimensional loop.
Theorem \ref{thm:single-cell} is algebraic. Its cognitive meaning depends on the preregistered label of the cell.

\begin{corollary}\label{cor:new-item}
Suppose a new atomic item $X$ is inserted as a vertex and two new edges attach it to vertices $u$ and $v$ in the same connected component of $K$.
Assume the first edge is attached before the second.
The insertion of the new vertex creates one class in $\mathrm H_0$, and the first edge kills this class. Hence these two operations together have no net effect on homology.
The second edge creates one class in $\mathrm H_1$. If a two-cell is subsequently attached along the resulting loop, that two-cell kills the new class.
\end{corollary}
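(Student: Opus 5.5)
The plan is to apply Theorem~\ref{thm:single-cell} once for each cell operation, in the order given, and to decide in each case whether $r=0$ or $r=1$ by identifying the connecting map $\delta$ concretely. For the cellular chain complex, $\delta$ sends the generator of $\mathrm H_n(K',K;\FF)\cong\FF$ to the class $[\partial e^n]\in\mathrm H_{n-1}(K;\FF)$. So in every case $r=1$ exactly when the boundary of the new cell is a nonzero homology class in the complex before attachment. The one degenerate case is the inserted vertex: the statement is about $n\geqslant1$, and a $0$-cell falls outside it. For that step I argue directly that a disjoint vertex adds a new component, so $\beta_0$ increases by one and all other Betti numbers are unchanged, since the chain complex splits as a direct sum.

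\textbf{Step 1 (vertex).} Let $K_1=K\sqcup\{X\}$. Then $\mathrm H_0(K_1)\cong\mathrm H_0(K)\oplus\FF$, and the new summand is spanned by $[X]$. This is the class created in $\mathrm H_0$.

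\textbf{Step 2 (first edge).} Attach $e_1$ with boundary $X-u$, giving $K_2$. The class $[X-u]\in\mathrm H_0(K_1)$ is nonzero because $X$ and $u$ lie in different components. Hence $r=1$, and Theorem~\ref{thm:single-cell} gives $\beta_0(K_2)=\beta_0(K_1)-1$ and $\beta_1(K_2)=\beta_1(K_1)$. The quotient kills exactly the class $[X]$ modulo $\mathrm H_0(K)$. Combined with Step 1, the Betti numbers of $K_2$ agree with those of $K$. In fact $K_2$ deformation retracts to $K$, since a free edge with a hanging vertex collapses; this gives the stronger claim of no net effect on homology.

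\textbf{Step 3 (second edge).} Attach $e_2$ with boundary $v-X$, giving $K_3$. In $K_2$ the vertices $X$ and $v$ lie in one component, because $X$ is joined to $u$ and $u$ and $v$ share a component of $K$. So $[v-X]=0$ in $\mathrm H_0(K_2)$ and $r=0$. Theorem~\ref{thm:single-cell} then gives $\beta_1(K_3)=\beta_1(K_2)+1$. The new class is represented explicitly by $z=e_1+e_2+\gamma$, where $\gamma$ is a $1$-chain in $K$ with $\partial\gamma=u-v$. Such a $\gamma$ exists because $u$ and $v$ are in the same component.

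\textbf{Step 4 (two-cell).} Attach $e^2$ along the loop, so that $\partial e^2=z$. Then $[z]\neq0$ in $\mathrm H_1(K_3)$ because $z$ is the new class: it involves $e_2$ with coefficient $1$, and no $2$-cell of $K_3$ has $e_2$ in its boundary. Hence $r=1$, and this two-cell kills the new class.

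The main subtlety is Step 4. Two things need care there. First, the phrase ``the resulting loop'' must be read as a cycle passing once through $X$ via $e_1,e_2$ and then back through a path $\gamma$ in $K$. Second, the nonvanishing of $[z]$ should be justified by the coefficient-of-$e_2$ argument rather than by an appeal to intuition.
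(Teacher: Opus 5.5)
Your proposal is correct and follows the same route as the paper: apply Theorem~\ref{thm:single-cell} to each attachment in turn, and read off $r$ from whether the boundary of the new cell is already a boundary in the complex before the attachment. Your version is more careful than the paper's proof in two places. You treat the $0$-cell separately, since it falls outside the theorem's hypothesis $n\geqslant1$. You also prove $r=1$ for the final two-cell via the coefficient of $e_2$, whereas the paper only asserts that the class \emph{can} be killed.
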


\begin{proof}
The new vertex first creates one component.
The first edge joins this component to the component containing $u$.
The connecting map in Theorem~\ref{thm:single-cell} has rank one, so the new $\mathrm H_0$ class is killed.
The second edge has both endpoints in the same component because $u$ and $v$ were connected in $K$.
Its connecting map to $\mathrm H_0$ is zero.
Theorem~\ref{thm:single-cell} with $n=1$ shows that one $\mathrm H_1$ class is created.
If a two-cell is then attached along this loop, the same theorem with $n=2$ shows that the class can be killed.
\end{proof}

\begin{example}\label{ex:knowledge-x}\rm
Let $A$ be an initial state and let $B$ be newly mastered material.
Assume that neither $A$ nor $B$ alone supports a derivation of $X$.
Set $K_0=K_{A\cup B}$, and choose vertices $u$ and $v$ in the same connected component of $K_0$.
After $A\cup B$ is mastered, suppose that the operational mastery criterion for the new atomic item $X$ is satisfied. Attach a new $0$-cell $x_X$ labeled by $X$ and set $K_1=K_0\sqcup\{x_X\}$.
Next attach a relation edge $e_u^1$ from $x_X$ to $u$ and then a relation edge $e_v^1$ from $x_X$ to $v$. Set $K_2=K_1\cup e_u^1$ and $K_3=K_2\cup e_v^1$.
If the model also records a higher-order synthesis which fills the resulting loop, attach a $2$-cell along that loop and denote the resulting complex by $K_4$.
This is precisely the process in Corollary~\ref{cor:new-item}. The label $X$ belongs only to the $0$-cell $x_X$; the edges and the optional $2$-cell have separate relation or synthesis labels.
Theorem~\ref{thm:single-cell} determines the possible homological change at each attachment. Whether an attachment is a candidate topological emergence is determined by its induced homology map as in Definition~\ref{def:three-level}, rather than merely by the appearance of the label $X$.
Many new facts can be leaves of a knowledge tree.
They enlarge the state without changing $\mathrm H_1$.
\end{example}

\begin{definition}[{Emergence}]\label{def:three-level}\rm \ 
\begin{itemize}
\item[(1)] A \defines{candidate cell event} occurs when a preregistered mastery rule changes the included cell set.
\item[(2)] Fix a homological degree $k$ and an arrow $f:A\to B$ that represents the event in a preregistered persistence or zigzag diagram. A \defines{candidate topological emergence} occurs when the induced map $\mathrm H_k(f):\mathrm H_k(A;\FF)\to\mathrm H_k(B;\FF)$ is not an isomorphism.
\item[(3)] A \defines{candidate capability emergence} occurs when the candidate topological emergence satisfies an experiment-specific robustness and null-model validation rule fixed before data analysis and is accompanied by improvement on an independent held-out transfer task.
\end{itemize}
\end{definition}

The present article does not prescribe a universal statistical test for robustness or null-model validation. In each empirical application, the researcher must specify and preregister the barcode discrepancy statistic, the finite threshold grid, the resampling or permutation unit, the null hypothesis, the null model, the rejection bound, and any multiple-testing correction. These experiment-specific choices form part of the operational definition of candidate capability emergence for that application.

\def\median{\mathop{\mathrm{median}}}

We now apply this construction to large models.
Fix atomic tasks $Q=\{q_1,\ldots,q_m\}$ and a finite regular CW complex $K^*$ whose cells have capability labels.
For every cell $\sigma$, choose a probe set $D_\sigma$, a finite nonempty set $\mathcal P_\sigma$ of prompt paraphrases, a finite nonempty set $\mathcal R_\sigma$ of random seeds, and a scoring function with values in $[0,1]$.
For every pair $(p,r)\in\mathcal P_\sigma\times\mathcal R_\sigma$,
the value $s_t(\sigma;D_\sigma,p,r)$ denotes the score obtained from the probe set $D_\sigma$,
the prompt paraphrase $p$, and the random seed $r$.
Consider the finite multiset
\[ \mathcal A_t(\sigma) = \{s_t(\sigma;D_\sigma,p,r): (p,r)\in\mathcal P_\sigma\times\mathcal R_\sigma \}\checks{.} \]
The purpose of using multiset here is to ensure that the repeated values are retained with their multiplicities.
Let $N =|\mathcal P_\sigma|\,|\mathcal R_\sigma|$, and arrange the elements of $\mathcal A_t(\sigma)$ as
$a_1\leqslant a_2\leqslant\cdots\leqslant a_N$.
We use the notation $\median\limits_{ p\in\mathcal P_\sigma, r\in\mathcal R_\sigma } s_t(\sigma;D_\sigma,p,r) $
for the number
\[ \median\limits_{ p\in\mathcal P_\sigma, r\in\mathcal R_\sigma } s_t(\sigma;D_\sigma,p,r)
:= \begin{cases}
     a_{\frac{N+1}{2}}, & \text{if $N$ is odd},\\
     \dfrac{a_{\frac{N}{2}}+a_{\frac{N}{2}+1}}{2}, & \text{if $N$ is even}.
   \end{cases}\]
At checkpoint $t$, define
\begin{align}\label{eq:cel-sco}
s_t(\sigma) = \median_{
  p\in\mathcal P_\sigma,
  r\in\mathcal R_\sigma
  } s_t(\sigma;D_\sigma,p,r).
\end{align}

For cells $\tau$ and $\sigma$ of $K^*$, we write $\tau\le \sigma$ when $\tau\subseteq\overline{\sigma}$. Thus the face relation used below is reflexive, and $\sigma\le \sigma$.

\begin{definition}\label{def:capab comp}\rm
For a threshold $\theta\in[0,1]$, the \defines{capability complex} at checkpoint $t$ is
\[ K_t(\theta)=\{\sigma\in\Cell(K^*)\mid s_t(\tau)\geqslant\theta\text{ for every face }\tau\le\sigma\}. \]
The right side denotes the CW subcomplex formed by these cells.
\end{definition}

A vertex probe tests one atomic capability.
An edge probe tests a relation or transfer between two atomic capabilities.
A higher-cell probe tests a preregistered joint behavior which cannot be reduced to one boundary item.
The face condition prevents a high-dimensional cell from entering when its required lower-dimensional evidence fails.
This construction turns AI measurements into a finite geometric object.

\begin{lemma}\label{lem:capability-subcomplex}
The set $K_t(\theta)$ in Definition~\ref{def:capab comp} is a CW subcomplex of $K^*$.
\end{lemma}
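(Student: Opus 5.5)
The plan is to reduce the claim to the standard criterion for subcomplexes of a regular CW complex. A set $S$ of cells of $K^*$ spans a CW subcomplex precisely when it is downward closed under the face relation $\le$: whenever $\sigma\in S$ and $\tau\le\sigma$, then $\tau\in S$. In that case the union $\bigcup_{\sigma\in S}\sigma$ is closed and is a union of cells. In a regular CW complex, the closure $\overline{\sigma}$ of each cell is a closed ball whose boundary is a union of lower-dimensional cells, by Definition~\ref{def:cw-complex}. Hence $\overline\sigma=\bigcup_{\tau\le\sigma}\tau$, a finite union of cells. The argument therefore has two steps: verify downward closure, then identify the union with a closed subspace built cell by cell.

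\textbf{Step 1 (downward closure).} Take $\sigma\in K_t(\theta)$ and a face $\tau\le\sigma$. For any face $\rho\le\tau$ we have $\rho\subseteq\overline\tau\subseteq\overline\sigma$, because $\overline\sigma$ is closed and contains $\tau$. Thus $\rho\le\sigma$, which gives $s_t(\rho)\geqslant\theta$. This holds for every $\rho\le\tau$, so $\tau\in K_t(\theta)$ by Definition~\ref{def:capab comp}. The only ingredient is transitivity of $\le$, which follows from closedness of $\overline\sigma$.

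\textbf{Step 2 (topological subcomplex).} Set $L=\bigcup_{\sigma\in K_t(\theta)}\sigma$. By Step 1 and the identity $\overline\sigma=\bigcup_{\tau\le\sigma}\tau$, we get $L=\bigcup_{\sigma\in K_t(\theta)}\overline\sigma$. This is a finite union of closed sets, so $L$ is closed in $K^*$. It is also a union of cells of $K^*$, so it is a CW subcomplex in the sense of \cite{hatcher2002}. Its skeleta are $L\cap (K^*)^{(n)}$. For each $n$-cell $\sigma$ of $L$, the attaching map of $\sigma$ lands in $\partial\overline\sigma$, which is a union of lower-dimensional faces of $\sigma$; these lie in $L$ by Step 1, so the map lands in $L^{(n-1)}$. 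Because $K^*$ is finite, no weak-topology issues arise. The condition of Definition~\ref{def:cks} also holds: every boundary cell of an included cell is included.

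\textbf{Main obstacle.} The only delicate point is the identity $\partial\overline\sigma=\bigcup_{\tau<\sigma}\tau$, which makes the combinatorial face relation match the topological closure. For a general CW complex the boundary of a cell can meet a cell without containing it, and then the union could fail to be closed. Regularity of $K^*$ is exactly what rules this out. I would cite the regularity clause of Definition~\ref{def:cw-complex} for this identity rather than prove it. Everything else is formal.
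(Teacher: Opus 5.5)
Your proof is correct and follows the paper's argument. The key step matches the paper's: a face of a face of $\sigma$ is a face of $\sigma$, so the defining condition of $K_t(\theta)$ passes to faces. Your Step~2 uses regularity to show that a face-closed set of cells has closed union, which spells out the final sentence the paper states without justification.
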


\begin{proof}
Take $\sigma\in K_t(\theta)$ and let $\rho$ be a face of $\sigma$.
Every face $\tau$ of $\rho$ is also a face of $\sigma$.
The defining condition for $\sigma$ shows that $s_t(\tau)\geqslant\theta$.
Hence $\rho\in K_t(\theta)$.
The cell set is closed under faces.
It therefore forms a CW subcomplex.
\end{proof}

\begin{proposition}\label{prop:threshold-filtration}
For a fixed checkpoint $t$, if $\theta_1\geqslant\theta_2$, then
$K_t(\theta_1)\subseteq K_t(\theta_2)$.
Thus decreasing the threshold produces an ordinary filtration.
\end{proposition}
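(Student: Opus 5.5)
The plan is a direct unwinding of Definition~\ref{def:capab comp}, followed by an appeal to Lemma~\ref{lem:capability-subcomplex} and Proposition~\ref{prop:homology-functoriality}.

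\emph{Inclusion of cell sets.} Fix a checkpoint $t$ and thresholds $\theta_1\geqslant\theta_2$. Take a cell $\sigma\in K_t(\theta_1)$. By definition, $s_t(\tau)\geqslant\theta_1$ for every face $\tau\le\sigma$. Transitivity of the order on $[0,1]$ then gives $s_t(\tau)\geqslant\theta_1\geqslant\theta_2$ for every such face. Hence $\sigma$ satisfies the defining condition of $K_t(\theta_2)$, and the cell sets satisfy $K_t(\theta_1)\subseteq K_t(\theta_2)$. Here the scores $s_t(\tau)$ from \eqref{eq:cel-sco} are fixed numbers that depend only on $t$ and $\tau$, not on $\theta$. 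The face relation $\le$ is likewise fixed by $K^*$. So only the threshold changes between the two conditions.

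\emph{Subcomplex inclusion.} By Lemma~\ref{lem:capability-subcomplex}, both $K_t(\theta_1)$ and $K_t(\theta_2)$ are CW subcomplexes of $K^*$. A subcomplex is determined by its set of cells. The inclusion of cell sets therefore gives an inclusion $K_t(\theta_1)\hookrightarrow K_t(\theta_2)$ of subcomplexes of $K^*$. This map is cellular, because it sends each cell to itself.

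\emph{The filtration statement.} Take a finite decreasing threshold grid $\theta_{(0)}\geqslant\theta_{(1)}\geqslant\cdots\geqslant\theta_{(T)}$. Applying the inclusion step to each consecutive pair produces a nested sequence $K_t(\theta_{(0)})\hookrightarrow\cdots\hookrightarrow K_t(\theta_{(T)})$. This is a finite filtration in the sense of Definition~\ref{def:persistence-module}. Proposition~\ref{prop:homology-functoriality} then supplies the induced persistence module. Compositions of inclusions are inclusions, so the functoriality identities hold automatically.

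There is no real obstacle in this argument. The only point needing care is that the face condition in Definition~\ref{def:capab comp} is monotone in $\theta$, because it is a conjunction of inequalities of the form $s_t(\tau)\geqslant\theta$. If the grid is infinite or uncountable, one restricts to the finitely many distinct values $s_t(\tau)$. Between consecutive values the complex is constant, so finiteness of the filtration is preserved.
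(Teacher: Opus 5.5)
Your proof is correct and follows the paper's argument: the face condition $s_t(\tau)\geqslant\theta_1$ for every $\tau\le\sigma$ implies $s_t(\tau)\geqslant\theta_2$, so $\sigma\in K_t(\theta_2)$. Your additional remarks are sound elaborations that the paper leaves implicit: the subcomplex inclusion via Lemma~\ref{lem:capability-subcomplex}, the finite threshold grid, and the fact that $K_t(\theta)$ changes only at the finitely many score values.
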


\begin{proof}
If $\sigma\in K_t(\theta_1)$, every face $\tau\le\sigma$ satisfies $s_t(\tau)\geqslant\theta_1$.
Since $\theta_1\geqslant\theta_2$, the same scores satisfy $s_t(\tau)\geqslant\theta_2$.
Definition~\ref{def:capab comp} shows that $\sigma\in K_t(\theta_2)$.
\end{proof}

\begin{definition}\label{def:capab-space}\rm
For a fixed checkpoint $t$, let $\mathcal K_t=\{K_t(\theta)\}_{\theta\in[0,1]}$.
Together with the inclusion maps $K_t(\theta_1)\hookrightarrow K_t(\theta_2)$ for $\theta_1\geqslant\theta_2$, the family $\mathcal K_t$ is called the \defines{capability space} at checkpoint $t$. More precisely, $\mathcal K_t$ is a filtered capability space whose filtration parameter is the reliability threshold.
\end{definition}

Definition~\ref{def:capab comp} and Proposition~\ref{prop:threshold-filtration} define a TDA construction from behavioral data. The finite score table is the observed data. The face condition converts these scores into the subcomplex $K_t(\theta)$. The threshold orders these subcomplexes into a filtration. Applying $\mathrm H_k(-;\FF)$ and Theorem~\ref{thm:persistence-decomposition} produces a barcode. This construction differs from distance-based point-cloud TDA in its rule from data to a complex. Its barcode describes the chosen organization of capabilities. It does not directly measure the number of mastered tasks.

Scores should be estimated on held-out data.
Bootstrap intervals can quantify finite-sample uncertainty \cite{efron1993}.
Multiple tests should be corrected, for example by false discovery rate control \cite{benjamini1995}.
Cell labels, prompts, thresholds, bridge rules, and null models should be fixed before the barcode is inspected.
If these assumptions fail, homology remains computable but its intended capability meaning and its statistical guarantees may fail.

For fixed $t$, a decreasing threshold provides ordinary persistent homology.
This check is useful because a discrete score can make a smooth behavioral change appear sudden \cite{schaeffer2023}.
The capability space $\mathcal K_t$ from Definition~\ref{def:capab-space} has a monotone threshold direction, while the training direction is generally nonmonotone.
We analyze one-dimensional slices.
Classical multiparameter persistence assumes monotonicity in every parameter direction \cite{carlsson2009multi}.
It is not identical to the present family.
Quasi-zigzag persistence combines scale and temporal zigzags \cite{dey2026}.
We use preregistered one-dimensional slices to keep the behavioral meaning transparent.

An alternative geometry uses hidden representations.
For fixed inputs, checkpoint $t$, and layer $\ell$, set
\[
Z_{t,\ell}=\{h_{t,\ell}(x_i)\mid1\leqslant i\leqslant N\}\subset\mathbb R^d.
\]
Vietoris--Rips, \v Cech, or nearest-neighbor complexes provide scale filtrations \cite{edelsbrunner2010}.
Layerwise topological changes in neural networks were studied in \cite{naitzat2020}.
The behavioral capability complex and the representation point cloud answer different questions.
The first records combinations of tasks that the model performs reliably.
The second records the geometric organization of hidden states.
Their barcodes should not be assigned the same semantic meaning without validation.

The paper therefore uses two data-to-topology maps. The behavioral map sends probe scores to the filtered capability complexes $\{K_t(\theta)\}_{\theta}$. The representation map sends activation vectors to metric point clouds and then to Vietoris--Rips or \v Cech filtrations. The first map studies relations among measured capabilities. The second map studies the geometry of hidden representations. Agreement between their barcodes is an empirical question and is not implied by persistent homology.

\subsection{Zigzag persistent homology for learning, forgetting, and recovery}

Ordinary persistence requires $K_t\subseteq K_{t+1}$.
The score in \eqref{eq:cel-sco} may rise or fall during training.
Continual fine-tuning has produced both new learning and loss of old knowledge under specific models and task orders \cite{luo2023}.
This observation motivates a nonnested model.
It does not show that every large-model training process is nonmonotone.
Two adjacent checkpoints may satisfy both
$K_t\setminus K_{t+1}\neq\varnothing$ and $K_{t+1}\setminus K_t\neq\varnothing$.

\begin{proposition}\label{prop:bridge-zigzag}
Let $K_0,\ldots,K_T$ be arbitrary subcomplexes of one finite CW complex $K$.
They produce two zigzags with canonical inclusions.
They are
\begin{align}\label{eq:uni-zig}
K_0&\longrightarrow K_0\cup K_1\longleftarrow K_1\longrightarrow K_1\cup K_2\longleftarrow K_2\longrightarrow\cdots
\end{align}
and
\begin{align}\label{eq:int-zig}
K_0&\longleftarrow K_0\cap K_1\longrightarrow K_1\longleftarrow K_1\cap K_2\longrightarrow K_2\longleftarrow\cdots.
\end{align}
After $\mathrm H_k(-;\FF)$ is applied, both finite-dimensional zigzag modules have unique interval decompositions.
\end{proposition}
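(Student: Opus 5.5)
The plan is to check three things in turn. First, the objects in \eqref{eq:uni-zig} and \eqref{eq:int-zig} are finite CW subcomplexes of $K$. Second, the arrows are cellular inclusions, so Proposition~\ref{prop:homology-functoriality} applies. Third, the resulting vector spaces are finite-dimensional, so Theorem~\ref{thm:zigzag-decomposition} gives the decomposition.

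For the first step, a subcomplex is a set of cells closed under taking faces. If $K_t$ and $K_{t+1}$ are both face-closed, so are $K_t\cup K_{t+1}$ and $K_t\cap K_{t+1}$.
\begin{itemize}
\item For the union: a face of a cell lying in $K_t$ or in $K_{t+1}$ lies in that same subcomplex.
\item For the intersection: a face of a cell lying in both subcomplexes lies in both.
\end{itemize}
This is the same argument as in Lemma~\ref{lem:capability-subcomplex}. The closure of each cell lies in the union of its faces, so both sets are closed subspaces that are unions of cells. They are therefore CW subcomplexes.

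The canonical inclusions $K_t\hookrightarrow K_t\cup K_{t+1}\hookleftarrow K_{t+1}$ and $K_t\hookleftarrow K_t\cap K_{t+1}\hookrightarrow K_{t+1}$ send each cell to itself. They are therefore cellular maps, and their chain maps are inclusions of cellular chain groups that commute with $\partial$. Applying Proposition~\ref{prop:homology-functoriality} to each arrow gives linear maps on $\mathrm H_k(-;\FF)$ in the prescribed directions. The result is a representation of an $\mathbb A$-type quiver with alternating orientation, of length $2T+1$, which is a zigzag persistence module in the sense of Definition~\ref{def:zigzag-module}. Functoriality matters only arrow by arrow, since no composites occur in a zigzag.

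For finite-dimensionality, every space in the diagram is a subcomplex of the finite complex $K$. Hence $C_k(-;\FF)$ has dimension at most the number of $k$-cells of $K$. The quotient $\mathrm H_k=Z_k/B_k$ then has dimension bounded by the same number. The index set is finite, so Theorem~\ref{thm:zigzag-decomposition}, the Carlsson--de Silva theorem, gives a decomposition into interval modules $I[a,b]$ with the interval multiset unique up to reordering. This holds for each $k$ and for both diagrams.

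No step is a genuine obstacle; the content is mostly bookkeeping. The point most deserving care is the subcomplex claim for the union and the intersection. It needs the face-closure characterization of CW subcomplexes in a regular complex, and I would make it explicit. A smaller point is the identification of the diagram with the quiver type in Definition~\ref{def:zigzag-module}(1), since Theorem~\ref{thm:zigzag-decomposition} is stated for that setting.
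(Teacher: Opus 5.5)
Your proposal is correct and follows essentially the same route as the paper: unions and intersections are subcomplexes by face-closure, the inclusions induce maps via Proposition~\ref{prop:homology-functoriality}, finiteness of $K$ makes every homology group finite-dimensional, and Theorem~\ref{thm:zigzag-decomposition} gives the unique interval decompositions. The regularity caveat you raise is not needed, because in any CW complex a subcomplex is a closed union of cells, and unions and intersections of such sets are again closed unions of cells.
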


\begin{proof}
We first verify that every bridge is a subcomplex.
If $\sigma\in\Cell(K_i\cup K_{i+1})$, then it belongs to one of the two subcomplexes.
Every face of $\sigma$ belongs to that same subcomplex and hence to the union.
If $\sigma\in\Cell(K_i\cap K_{i+1})$, each face belongs to both subcomplexes and hence to the intersection.
Thus unions and intersections are subcomplexes.
Lemma~\ref{lem:capability-subcomplex} verifies this fact directly for capability states.

For adjacent states, the relations
\[
K_i\subseteq K_i\cup K_{i+1}\supseteq K_{i+1}
\quad\text{and}\quad
K_i\supseteq K_i\cap K_{i+1}\subseteq K_{i+1}
\]
provide the arrows in \eqref{eq:uni-zig} and \eqref{eq:int-zig}.
Proposition~\ref{prop:homology-functoriality} induces linear maps in the same directions.
Every chain group is finite-dimensional because $K$ has finitely many cells.
Each homology group is also finite-dimensional by Definition~\ref{def:homology}.
Theorem~\ref{thm:zigzag-decomposition} now provides the unique interval decompositions.
\end{proof}

\begin{remark}\rm
Proposition~\ref{prop:bridge-zigzag} places a nonmonotone checkpoint sequence inside TDA. At one checkpoint, the threshold direction in Proposition~\ref{prop:threshold-filtration} is an ordinary filtration. Across checkpoints, additions and deletions may occur together, so the spaces need not form a filtration. The union or intersection bridges replace the missing inclusions by a zigzag diagram. Theorem~\ref{thm:zigzag-decomposition} then provides interval summaries. The bridge rule is part of the TDA construction because different bridges can induce different continuation maps.
\end{remark}

Let $Z_{2i}=K_i$ and let $Z_{2i+1}=B_{i,i+1}$, where the bridge is a union or an intersection.
Assign times $i$ and $i+\tfrac12$ to these nodes.
Barcode endpoints should first be reported on the original $2T+1$ indices.
An interval supported only on a bridge node is a \defines{transition class}.
It does not show that either checkpoint has the corresponding capability.
A union bridge may fill or create a loop by mixing cells from both sides.
An intersection bridge keeps only the common skeleton.
If the two bridge schemes produce different continuations, we call the interval \defines{bridge-sensitive}.
We do not directly interpret it as a capability death.
Both schemes should be reported as a sensitivity analysis.

\begin{example}\rm
We compute a finite example.
Let the atomic capabilities be vertices $a,b,c$.
Let $L_0$ contain oriented edges $u=(a,b)$ and $v=(b,c)$.
With vertex basis $(a,b,c)$ and edge basis $(u,v)$, we have
\[
\partial_1^{L_0}=\begin{pmatrix}-1&0\\1&-1\\0&1\end{pmatrix},
\qquad \rank\partial_1^{L_0}=2.
\]
The space $L_0$ is connected and $(\beta_0,\beta_1)=(1,0)$.
Adding $x=(c,a)$ to obtain $L_1$, then we have
\[
\partial_1^{L_1}=\begin{pmatrix}-1&0&1\\1&-1&0\\0&1&-1\end{pmatrix},
\quad \rank\partial_1^{L_1}=2,
\quad \mathrm H_1(L_1)=\FF\langle u+v+x\rangle.
\]
The orientation only fixes the cellular bases and the signs in the matrix,
and ordinary CW homology does not distinguish ``$a$ implies $b$'' from ``$b$ implies $a$''.
See the top row shown in \Pic \ref{fig:finite-zigzag}.
An inference direction can be stored in $\lambda$, but it does not enter our homology calculation.
\begin{figure}[htbp]
\centering
\begin{tikzpicture}[
  vertex/.style={circle,fill=black,inner sep=1.6pt},
  edge/.style={->,>=stealth,line width=0.75pt},
  every node/.style={font=\small}
]
\begin{scope}[xshift=0cm,yshift=0cm]
  \coordinate (l0a) at (-0.9,0);
  \coordinate (l0b) at ( 0.0,0);
  \coordinate (l0c) at ( 0.9,0);
  \draw[edge] (l0a)--node[above] {$u$} (l0b);
  \draw[edge] (l0b)--node[above] {$v$} (l0c);
  \foreach \z/\lab in {l0a/$a$,l0b/$b$,l0c/$c$}{\node[vertex] at (\z) {};\node[below=3pt] at (\z) {\lab};}
  \node at (0,0.85) {$L_0$};
  \node at (0,-0.72) {$\beta_1=0$};
\end{scope}

\node at (2.0,0) {$\hookrightarrow$};

\begin{scope}[xshift=4cm,yshift=0cm]
  \coordinate (l1a) at (-0.85,-0.40);
  \coordinate (l1b) at ( 0.00,0.65);
  \coordinate (l1c) at ( 0.85,-0.40);
  \draw[edge] (l1a)--node[above left] {$u$} (l1b);
  \draw[edge] (l1b)--node[above right] {$v$} (l1c);
  \draw[edge] (l1c)--node[below] {$x$} (l1a);
  \foreach \z/\pos/\lab in {l1a/below left/$a$,l1b/above/$b$,l1c/below right/$c$}{\node[vertex] at (\z) {};\node[\pos=3pt] at (\z) {\lab};}
  \node at (0,1.28) {$L_1$};
  \node at (0,-1.05) {$\beta_1=1$};
\end{scope}

\node at (6.0,0) {$\hookleftarrow$};

\begin{scope}[xshift=8cm,yshift=0cm]
  \coordinate (l2a) at (-0.9,0);
  \coordinate (l2b) at (0,0);
  \coordinate (l2c) at (0.9,0);
  \draw[edge] (l2a)--node[above] {$u$} (l2b);
  \draw[edge] (l2b)--node[above] {$v$} (l2c);
  \foreach \z/\lab in {l2a/$a$,l2b/$b$,l2c/$c$}{\node[vertex] at (\z) {};\node[below=3pt] at (\z) {\lab};}
  \node at (0,0.85) {$L_2=L_0$};
  \node at (0,-0.72) {$\beta_1=0$};
\end{scope}

\node at (10.0,0) {$\hookrightarrow$};

\begin{scope}[xshift=12cm,yshift=0cm]
  \coordinate (l3a) at (-0.82,0);
  \coordinate (l3b) at (0,0.72);
  \coordinate (l3c) at (0.82,0);
  \coordinate (l3d) at (0,-0.72);
  \draw[edge] (l3a)--node[above left] {$u$} (l3b);
  \draw[edge] (l3b)--node[above right] {$v$} (l3c);
  \draw[edge] (l3c)--node[below right] {$p$} (l3d);
  \draw[edge] (l3d)--node[below left] {$q$} (l3a);
  \foreach \z/\pos/\lab in {l3a/left/$a$,l3b/above/$b$,l3c/right/$c$,l3d/below/$d$}{\node[vertex] at (\z) {};\node[\pos=3pt] at (\z) {\lab};}
  \node at (0,1.38) {$L_3$};
  \node at (0,-1.50) {$\beta_1=1$};
\end{scope}

\begin{scope}[xshift=4cm,yshift=-3.25cm]
  \coordinate (a1a) at (-0.85,-0.40);
  \coordinate (a1b) at (0,0.65);
  \coordinate (a1c) at (0.85,-0.40);
  \draw[edge] (a1a)--node[above left] {$u$} (a1b);
  \draw[edge] (a1b)--node[above right] {$v$} (a1c);
  \draw[edge] (a1c)--node[below] {$x$} (a1a);
  \foreach \z/\pos/\lab in {a1a/below left/$a$,a1b/above/$b$,a1c/below right/$c$}{\node[vertex] at (\z) {};\node[\pos=3pt] at (\z) {\lab};}
  \node at (0,1.28) {$L_1$};
  \node at (0,-1.08) {$\mathrm H_1(L_1)\cong\FF$};
\end{scope}

\node at (6,-3.25) {$\xrightarrow{\text{attach }f}$};

\begin{scope}[xshift=8cm,yshift=-3.25cm]
  \coordinate (f1a) at (-0.85,-0.40);
  \coordinate (f1b) at (0,0.65);
  \coordinate (f1c) at (0.85,-0.40);
  \fill[blue!18] (f1a)--(f1b)--(f1c)--cycle;
  \draw[edge] (f1a)--node[above left] {$u$} (f1b);
  \draw[edge] (f1b)--node[above right] {$v$} (f1c);
  \draw[edge] (f1c)--node[below] {$x$} (f1a);
  \foreach \z/\pos/\lab in {f1a/below left/$a$,f1b/above/$b$,f1c/below right/$c$}{\node[vertex] at (\z) {};\node[\pos=3pt] at (\z) {\lab};}
  \node at (0,0) {$f$};
  \node at (0,1.28) {$L_1^+$};
  \node at (0,-1.08) {$\mathrm H_1(L_1^+)=0$};
\end{scope}
\end{tikzpicture}
\caption{The top row shows the checkpoint zigzag $L_0\hookrightarrow L_1\hookleftarrow L_2\hookrightarrow L_3$;
and the bottom row shows the auxiliary attachment of the 2-cell $f$}
\label{fig:finite-zigzag}
\end{figure}

Attach a $2$-cell $f$ along the loop $u+v+x$ and call the result $L_1^+$.
Then
\[ \partial_2^{L_1^+}(f)=u+v+x,
  \quad [\partial_2]_{(u,v,x)}=(1,1,1)^{\mathsf T},
  \quad \beta_1(L_1^+)=0. \]
This is an algebraic example in which a $2$-cell kills a one-dimensional class.
A higher-order synthesis interpretation still requires preregistered semantics and external behavioral evidence.
Set $L_2=L_0$ by deleting $x$ (see the top row shown in \Pic \ref{fig:finite-zigzag}).
Then we have $\mathrm H_1(L_2)=0$.
Adding a vertex $d$ and edges $p=(c,d)$ and $q=(d,a)$ to obtain $L_3$,
then we have that this connected graph has four vertices and four edges.
Hence $\rank\partial_1^{L_3}=3$ and
$\mathrm H_1(L_3)=\FF\langle u+v+p+q\rangle$.
Thus, we have
\[ L_0\hookrightarrow L_1\hookleftarrow L_2\hookrightarrow L_3,
\quad 0\longrightarrow\FF\longleftarrow0\longrightarrow\FF \cong I[1,1]\oplus I[3,3]. \]
Algebraically, the classes of $L_1$ and $L_3$ belong to different intervals.
Calling the latter a recovery instead of a new class requires independent behavioral evidence.
A union bridge may continue a class even when its cycle representative changes.
The precise statement is only that the homology class continues through the chosen bridge.
\end{example}

Birth and death must be interpreted together with cell types.
Deleting a critical path may kill an $\mathrm H_1$ class and represent forgetting.
Attaching a $2$-cell may kill an $\mathrm H_1$ class and represent higher-order synthesis.
The two events have the same effect on the Betti number.
They differ in their update mechanisms and relative homology.
One should store the dimension, attaching map, and behavioral score of every update.

Nearest-neighbor complexes from $Z_{t,\ell}$ are also generally not nested in $t$ or $\ell$.
Unions or intersections can be placed between adjacent point-cloud complexes.
Zigzag persistence has been used along network depth to study last-token representations of large language models \cite{gardinazzi2026}.
Persistent homology has also been tracked across dense fine-tuning checkpoints \cite{malhotra2026}.
We do not claim that the use of training checkpoints alone is new.
Our proposal combines a preregistered behavioral capability complex, a nonnested training zigzag, and independent behavioral validation.

\subsection{Comparison between ordinary persistence and zigzag persistence}

\begin{table}[htbp]
\centering\small
\renewcommand{\arraystretch}{1.35}
\begin{tabular}{p{2.4cm}p{5.1cm}p{5.1cm}}
\toprule
Item & Ordinary persistence & Zigzag persistence\\
\midrule
Space relation & $K_0\subseteq K_1\subseteq\cdots$ & Each adjacent arrow may point forward or backward\\
Suitable process & Monotone accumulation, scale expansion, and knowledge without deletion & Learning, forgetting, recovery, and representation replacement\\
Algebraic object & Persistence module with all arrows in one direction & Representation of an arbitrarily oriented quiver of type $A_n$\\
Barcode meaning & Birth and death in a monotone filtration & Support interval in a nonmonotone sequence\\
Main advantage & Simple construction and mature stability theory & Deletion is represented directly by reversed inclusions\\
Main limitation & Deletion cannot be expressed by forward inclusions & Continuation depends on the union or intersection bridge\\
\bottomrule
\end{tabular}
\caption{The roles of ordinary and zigzag persistence in the present model \cite{edelsbrunner2002,zomorodian2005,carlsson2010}.}
\label{tab:comparison}
\end{table}

\begin{proposition}\label{prop:history-union}
Let $K_0,\ldots,K_T$ be subcomplexes of one ambient complex and let
$\widetilde K_t=\bigcup_{s\leqslant t}K_s$.
Then $\widetilde K_t\subseteq\widetilde K_{t+1}$.
If $\sigma\in\Cell(K_s)\setminus\Cell(K_t)$ for some $s<t$, then $\sigma\in\Cell(\widetilde K_t)$.
Thus the historical-union filtration cannot distinguish present existence from past existence.
\end{proposition}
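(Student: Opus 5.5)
The plan is to argue directly from the definition $\widetilde K_t=\bigcup_{s\leqslant t}K_s$, using only the fact, established in the proof of Proposition~\ref{prop:bridge-zigzag}, that a union of subcomplexes of one ambient complex is again a subcomplex. First I will check that each $\widetilde K_t$ is a CW subcomplex. I induct on the number of terms, or simply observe that every cell of the union lies in some $K_s$. All faces of that cell then lie in the same $K_s$ and hence in the union. Consequently each $\widetilde K_t\hookrightarrow\widetilde K_{t+1}$ is an inclusion of subcomplexes, and Proposition~\ref{prop:homology-functoriality} applies to it.

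For the monotonicity claim, I will note that $\widetilde K_{t+1}=\widetilde K_t\cup K_{t+1}$, since the index set $\{s\leqslant t+1\}$ is $\{s\leqslant t\}\cup\{t+1\}$. Hence $\widetilde K_t\subseteq\widetilde K_{t+1}$, and the family $\{\widetilde K_t\}$ is a finite filtration in the sense of Definition~\ref{def:persistence-module}.

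For the second claim, take $\sigma\in\Cell(K_s)\setminus\Cell(K_t)$ with $s<t$. Since $s\leqslant t$, the complex $K_s$ is one of the terms of the union defining $\widetilde K_t$, so $\sigma\in\Cell(\widetilde K_t)$. The hypothesis $\sigma\notin\Cell(K_t)$ is not used to show membership. Its role is to make the final sentence precise.

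To finish, I will compare two situations. In the first, $\sigma$ lies in both $K_s$ and $K_t$, so it is present at time $t$. In the second, $\sigma$ lies in $K_s$ but not in $K_t$, so it existed only in the past. In both situations $\sigma\in\widetilde K_t$, so the historical-union complex, and hence any homology computed from the filtration $\{\widetilde K_t\}$, takes the same value on the two cases. For a concrete witness I would reuse the finite example: with $L_0,L_1,L_2$, the union $\widetilde L_2=L_1$ still contains $x$ and has $\beta_1=1$, although $\mathrm H_1(L_2)=0$.

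I expect no genuine obstacle, since every step is set-theoretic. The only point needing care is stating precisely what "cannot distinguish" means. I plan to phrase it as the assertion that the map $(K_0,\ldots,K_T)\mapsto\widetilde K_t$ is not injective, exhibited by the example above.
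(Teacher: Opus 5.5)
Your proposal is correct and follows essentially the same argument as the paper. Monotonicity holds because every term $K_s$ with $s\leqslant t$ of $\widetilde K_t$ also occurs in $\widetilde K_{t+1}$, and $\sigma\in\Cell(\widetilde K_t)$ because $K_s$ with $s<t$ is one of the terms of the union. Your subcomplex check and your witness $\widetilde L_2=L_1$ (with $\beta_1=1$ although $\mathrm H_1(L_2)=0$) are correct additions that the paper does not include.

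If you formalize ``cannot distinguish'', non-injectivity of $(K_0,\ldots,K_T)\mapsto\widetilde K_t$ alone is weak. A sharper form is that $(L_0,L_1,L_0)$ and $(L_0,L_1,L_1)$ have the same historical filtration $(L_0,L_1,L_1)$, while their present states at $t=2$ differ.
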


\begin{proof}
Take $\sigma\in\Cell(\widetilde K_t)$.
There is $s\leqslant t$ such that $\sigma\in\Cell(K_s)$.
The same $K_s$ occurs in $\widetilde K_{t+1}$.
Hence $\sigma\in\Cell(\widetilde K_{t+1})$.
This proves the inclusion.

Now assume $\sigma\in\Cell(K_s)\setminus\Cell(K_t)$ for some $s<t$.
Since $s\leqslant t$, we have
\[ \sigma\in\Cell(K_s)\subseteq\Cell \bigg(\bigcup_{r\leqslant t}K_r\bigg)
 = \Cell(\widetilde K_t). \]
The cell has disappeared from the current state $K_t$, but it remains in the historical union.
The historical union cannot decide whether the cell is still present at time $t$.
Proposition~\ref{prop:bridge-zigzag} retains this information through a nonmonotone diagram.
\end{proof}

\begin{remark}\rm
If an ideal learner satisfies $K_t\subseteq K_{t+1}$, ordinary persistent homology is sufficient.
Old capabilities may decline during fine-tuning \cite{luo2023}.
The historical union answers what appeared at least once.
The zigzag sequence can still represent what the current model can do.
\end{remark}

An empirical study should report the following information.
\begin{itemize}
\item The checkpoints and their training-data stages.
\item The operational definitions of capability cells, labels, and probes.
\item Sensitivity analyses for the threshold $\theta$ and the bridge scheme.
\item Barcode changes together with continuous behavioral scores.
\end{itemize}
A homological change should be reported as a robust candidate topological emergence only when it satisfies the experiment-specific validation rule preregistered for that study.
It becomes a candidate capability emergence only if an independent transfer task changes at the same time.
The bottleneck stability theorem controls small perturbations when filtering functions are defined on a common triangulable space and the perturbation is bounded in the uniform norm \cite{cohensteiner2007}.
It does not automatically cover arbitrary perturbations of a training zigzag.
It does not prove that the capability complex is correct.
It also does not turn correlation into causation.

\section{Conclusions and limitations}

Ordinary persistent homology first describes a local algebraic effect in the CKS model.
The attachment of an $n$-cell can only create an $\mathrm H_n$ class or kill an $\mathrm H_{n-1}$ class.
Zigzag persistent homology then removes the monotonicity assumption.
It joins local events along a training process into a decomposable global algebraic object.
The amount of capability and the organization of capability can therefore be measured separately.
Events preregistered as learning, forgetting, recovery, or higher-order synthesis can be compared in one algebraic language.

The main limitations come from the encoding.
The ambient complex $K^*$, cell meanings, probe scores, and thresholds are chosen by the researcher.
The resulting barcode is conditional on these choices.
The bridge scheme can change interval continuation.
Finite samples can misclassify cells.
Classical stability results do not directly establish statistical stability for a training zigzag.
Our theorems explain how cellular updates change homology.
They do not prove that a homology class is a semantic capability.
They do not prove a causal mechanism for training-induced capability change.
Empirical studies should preregister construction rules, use held-out data, report resampling intervals, correct multiple tests, use permutation null models, and validate emergence on independent tasks.

Ordinary CW homology also forgets the direction of a relation.
If direction-sensitive invariants are required, one may use directed complexes, path homology, or quiver representations.
The regular CW assumption restricts the attaching maps.
A general closed walk which repeats a vertex or an edge is excluded here.
One can duplicate vertices or edges to construct a regular ambient complex, or one can use nonregular CW attachments.
Subdivision of the original graph alone does not solve every repeated-boundary problem.

\section*{Declarations}

\paragraph{Authors' Contributions}

The author contributed to the conception, methodology, derivation, and writing of this paper.

\paragraph{Competing Interests}

The authors declare that they have no conflicts of interest as defined by the journal, nor any other interests that could be perceived as influencing the results presented in this paper.

\paragraph{Data availability}
No external data set or computational code was used.

\paragraph{Ethical Approval}

This article does not require ethical approval.

\paragraph{Funding}
Yu-Zhe Liu is supported by
the National Natural Science Foundation of China (Grant Nos. 12401042 and 12561008),
the Guizhou Provincial Key Laboratory of Applied Mathematics and Computing Power \& Algorithms (No. Qiankehe Platform ZSYS[2025]039),
the Science and Technology Foundation of the Guizhou S~\&~T Department (Grant Nos. VZD[2026]001, ZD[2025]085 and ZK[2024]YiBan066),
and Scientific Research Foundation of Guizhou University (Grant No. [2023]16).

%

\bibliographystyle{abbrv}
\bibliography{persistent_homo_and_llm}

\end{document}